\begin{document}
\textwidth 5.5in
\textheight 8.3in
\evensidemargin .75in
\oddsidemargin.75in

\newtheorem{lem}{Lemma}[section]
\newtheorem{conj}[lem]{Conjecture}
\newtheorem{defn}[lem]{Definition}
\newtheorem{thm}[lem]{Theorem}
\newtheorem{cor}[lem]{Corollary}
\newtheorem{prob}[lem]{Problem}
\newtheorem{exm}[lem]{Example}
\newtheorem{rmk}[lem]{Remark}
\newtheorem{que}[lem]{Question}
\newtheorem{prop}[lem]{Proposition}
\newtheorem{clm}[lem]{Claim}
\newcommand{\p}[3]{\Phi_{p,#1}^{#2}(#3)}
\def\Z{\mathbb Z}
\def\R{\mathbb R}
\def\g{\overline{g}}
\def\odots{\reflectbox{\text{$\ddots$}}}
\newcommand{\tg}{\overline{g}}
\def\proof{{\bf Proof. }}
\def\ee{\epsilon_1'}
\def\ef{\epsilon_2'}
\title{Finite order corks}
\author{Motoo Tange}
\thanks{The author was partially supported by JSPS KAKENHI Grant Number 26800031}
\subjclass{57R55, 57R65}
\keywords{Stein manifold, finite order cork, exotic 4-manifold}
\address{Institute of Mathematics, University of Tsukuba,
 1-1-1 Tennodai, Tsukuba, Ibaraki 305-8571, Japan}
\email{tange@math.tsukuba.ac.jp}
\date{\today}
\maketitle
\begin{abstract}
We show that for any positive integer $m$, there exist order $n$ Stein corks $(C_{n,m},\tau_{n,m}^C)$.
The boundaries are cyclic branched covers of slice knots embedded in the boundary of a cork.
By applying these corks to generalized forms, we give a method producing examples of many finite order corks, which are possibly not Stein cork.
The examples of the Stein corks give $n$ homotopic and contactomorphic but non-isotopic Stein filling contact structures for any $n$.
\end{abstract}
%
%
\section{Introduction}
\label{intro}
\subsection{4-manifold and cork.}
If two smooth manifolds $X_1$ and $X_2$ are homeomorphic but non-diffeomorphic, then
we say that $X_1$ and $X_2$ are {\it exotic}.
Let $X$ be a smooth manifold and $Y$ a codimension $0$ submanifold in $X$.
We denote the cut-and-paste $(X-Y)\cup_\phi Z$ by $X(Y,\phi,Z)$.
In the case of $Y=Z$, we denote such a surgery by $X(Y,\phi)$ and say it {\it a twist}.
For simply-connected closed exotic 4-manifolds the following theorem is well-known.
\begin{thm}[\cite{CFHS},\cite{Mat},\cite{AM}]
For any simply-connected closed exotic 4-manifolds $X_1,X_2$, there exist a contractible Stein manifold
${\mathcal C}$, an embedding $\mathcal{C}\hookrightarrow X_1$, and a self-diffeomorphism $t:\partial{\mathcal C}\to \partial{\mathcal C}$ with $t^2=\text{id}$
such that
$$X_1({\mathcal C},t)=X_2.$$
\end{thm}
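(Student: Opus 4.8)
The plan is to localize the smooth non-triviality of a topological product into a contractible Stein piece. Since $X_1$ and $X_2$ are homeomorphic and simply-connected, their intersection forms are isometric, so by Wall's theorem they are smoothly h-cobordant via some $5$-manifold $W$. First I would apply Morse theory and handle trading to arrange that $W$ has handles only of index $2$ and $3$, in equal numbers. Turning $W$ upside down then exhibits the belt $2$-spheres $A_i$ of the $2$-handles and the attaching $2$-spheres $B_j$ of the $3$-handles as living in a common middle level $M^4$, and because $W$ is an h-cobordism the intersection numbers satisfy $A_i\cdot B_j=\delta_{ij}$ after a change of basis.

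The core of the argument is to promote this algebraic duality to a geometric one after isolating a compact region. By Freedman's topological Whitney trick in the simply-connected setting, the families $\{A_i\}$ and $\{B_j\}$ can be made geometrically dual by sliding across topologically embedded Whitney disks, so that the entire smooth obstruction is carried by those disks. I would then take $C$ to be a regular neighborhood of the union of the attaching circles, the dual spheres, and the Whitney disks, arranged so that $C$ is contractible with homology-sphere boundary, and check that $W$ restricts to a smooth product over the complement $X_1\setminus C$. This yields $X_2=X_1(C,t)$ for a self-diffeomorphism $t$ of $\partial C$; moreover, since reversing $W$ interchanges the two families $\{A_i\}$ and $\{B_j\}$, the induced boundary identification is its own inverse, giving $t^2=\mathrm{id}$.

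To upgrade $C$ to a Stein manifold I would invoke the Akbulut--Matveyev refinement via Eliashberg's characterization: a contractible $4$-manifold built from a single $0$-handle together with $1$- and $2$-handles is Stein provided each $2$-handle is attached along a Legendrian knot with framing one less than its Thurston--Bennequin number, a position attainable by isotopy for the handle presentation of $C$. The step I expect to be the main obstacle is the simultaneous control demanded by the localization: the neighborhood $C$ must be chosen small enough to remain contractible, yet large enough to swallow all the topological Whitney disks and to leave a genuine smooth product on its exterior, all while respecting the $2$/$3$-handle symmetry that forces $t$ to square to the identity. Reconciling contractibility with the product structure on the complement---essentially a delicate boundary application of Freedman's theory---is where the real difficulty lies.
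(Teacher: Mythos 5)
This theorem is quoted by the paper from \cite{CFHS}, \cite{Mat}, \cite{AM} rather than proved, so your sketch has to be measured against those sources. Your skeleton matches theirs (Wall's theorem to get a smooth h-cobordism $W$, handle trading to leave only $2$- and $3$-handles, belt spheres $A_i$ and attaching spheres $B_j$ algebraically dual in a middle level, localization into a contractible piece), but the central step is inverted in a way that breaks the argument. You propose to use Freedman's topological Whitney trick to make $\{A_i\}$ and $\{B_j\}$ \emph{geometrically} dual and then take a regular neighborhood containing the topological Whitney disks. If geometric duality were actually achieved, the handles would cancel and $W$ would be a product --- but only topologically, since the Whitney disks and the ambient isotopies are merely topological; this just reproves Freedman's homeomorphism and carries no smooth content. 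Moreover a topologically embedded disk has no smooth regular neighborhood, so the piece $C$ you build this way is not a smooth submanifold and the claim that $W$ is a \emph{smooth} product over $X_1\setminus C$ has no justification. The actual proofs do the opposite: they keep \emph{smoothly} embedded Whitney disks whose interiors are allowed to intersect the spheres (precisely the failure of the smooth Whitney trick), enclose the spheres together with these defective disks in a compact piece $N$, observe that the cobordism is a genuine smooth product over the complement because all cores, cocores and disks sit over $N$, and then perform further smooth modifications to make $N$ contractible with homology-sphere boundary. Freedman's theorem is invoked only once, to see that the sub-h-cobordism over the contractible piece is a topological product; a separate argument in \cite{CFHS} arranges that its top is \emph{diffeomorphic} to $C$, which is what makes the statement $X_1(\mathcal{C},t)=X_2$ (same smooth $\mathcal{C}$ on both sides) meaningful.

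Two further gaps. First, the involution: ``reversing $W$ interchanges the $A_i$ and $B_j$'' does not by itself produce a boundary diffeomorphism with $t^2=\mathrm{id}$; in \cite{Mat} and \cite{AM} the involution is exhibited concretely from a symmetric handle diagram of $\mathcal{C}$ --- the exchange of dotted $1$-handles and $0$-framed $2$-handles --- which is exactly the symmetry this paper uses for $(C(m),\tau(m))$. Second, the Stein upgrade: the framing condition $f=tb-1$ is \emph{not} ``attainable by isotopy'' for an arbitrary handle presentation. Legendrian stabilization only decreases $tb$, so a $2$-handle framing can be realized in the form $tb-1$ only if it is strictly less than the maximal Thurston--Bennequin number available in the diagram; \cite{AM} must verify this for the particular handle decomposition their construction yields, and this verification is a substantive part of their proof rather than a formality. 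So your outline has the right large-scale shape, but the smooth-versus-topological bookkeeping at the heart of the localization, the actual source of the involution, and the Legendrian realization step are all missing or wrong as stated.
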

This theorem says that a pair of a contractible Stein manifold and a self-diffeomorphism of the boundary causes ``exoticity'' of simply-connected closed smooth 4-manifolds.
Studying corks is important for understanding the exotic phenomenon of 4-manifolds.
We give a definition of cork in a generalized form.
\begin{defn}[Cork]
Let ${\mathcal C}$ be a contractible 4-manifold and $t$ a self-diffeomorphism $\partial \mathcal{C}\to \partial \mathcal{C}$ on the boundary.
If $t$ cannot extend to a map $\mathcal{C}\to\mathcal{C}$ as a diffeomorphism, then $(\mathcal{C},t)$ is called a cork.

For a pair of exotic two 4-manifolds $X_1,X_2$ and a smooth embedding $\mathcal{C}\hookrightarrow X_1$,
if $X_1(\mathcal{C},t)=X_2$, then $(\mathcal{C},t)$ is called a cork for $X_1,X_2$.
We call the deformation $X_1\to X_1(\mathcal{C},t)$ {\it a cork twist}.
\end{defn}
Note that this definition of cork are weaker than that of the usual one in terms of the following two points.
The definition here does not assume that $\mathcal{C}$ is Stein and $t$ satisfies $t^2=\text{id}$.
If the 4-manifold $\mathcal{C}$ of a cork $(\mathcal{C},t)$ is Stein, then $(\mathcal{C},t)$ is called a {\it Stein cork}.
\subsection{Order $n$ cork}
We define order $n$ cork.
\begin{defn}[Order $n$ cork]
\label{orderndefinition}
Let $({\mathcal C},t)$ be a cork.
If the following conditions are satisfied, then we call $(\mathcal{C},t)$ an order $n$ cork:
\begin{enumerate}
\item the composition $t\circ t\circ \cdots \circ t=t^i$ $(0<i< n)$ cannot extend to any diffeomorphism $\mathcal{C}\to \mathcal{C}$.
\item $t^n$ can extend to a diffeomorphism $\mathcal{C}\to \mathcal{C}$. 
\end{enumerate}
This number $n$ is called the order of the cork $({\mathcal C},t)$.

Let $\mathbb{X}$ be a pair of $n$ mutually exotic 4-manifolds $\{X=X_0,\cdots,X_{n-1}\}$.
If there exists an embedding $\mathcal{C}\hookrightarrow X$ such that $X_i=X(\mathcal{C},t^i)$, then $(\mathcal{C},t)$ is called a cork for this collection $\mathbb{X}$.
\end{defn}
The existence of finite order corks has been not known except for order 2.
\subsection{Aims.}
Let $\mathcal{C}$ be a contractible 4-manifold.
One of our aims of this article is to construct infinite families of order $n$ corks for each $n>1$.
Another aim is to give a technique to show that the map $t$ for a twist $(\mathcal{C},t)$ cannot extend to inside $\mathcal{C}$ as a diffeomorphism, i.e, $(\mathcal{C},t)$ is a cork.

Freedman's result \cite{F} says that the diffeomorphism on $\partial \mathcal{C}$ extend to a {\it self-homeomorphism} on $\mathcal{C}$.

Infinite order corks are not known so far.
\subsection{Results.}
\label{results}
Let $(C(m),\tau(m))$ be a pair defined as the handle diagram as in {\sc Figure}~\ref{Ack} and the diffeomorphism $\tau(m)$ is order $2$.
\begin{figure}[htbp]
\begin{center}\includegraphics[width=.5\textwidth]{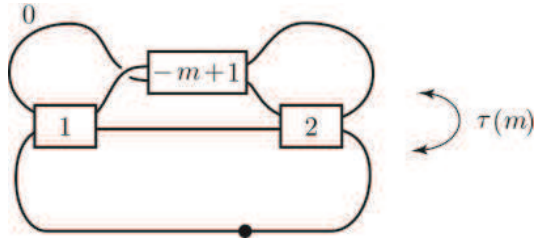}
\caption{Corks $(C(m),\tau(m))$.}
\label{Ack}
\end{center}
\end{figure}
In the case of $m=1$, $(C(1),\tau(1))$ is the Akbulut cork in \cite{A1}.
$(C(m),\tau(m))$ is an example of an order 2 Stein cork (Proposition~\ref{corkorder2}).
By positioning several copies of the attaching spheres of this cork on the boundary of the 0-handle, we give examples of finite corks.
One of the main theorems is the following.
\begin{thm}
\label{main}
Let $n,m$ be integers with $n>1$ and $m>0$.
There exists an order $n$ cork $(C_{n,m},\tau_{n,m}^C)$.
The handle decomposition and the map $\tau_{n,m}^C$ are described in {\sc Figure}~\ref{beisotopy}.
Furthermore, $C_{n,m}$ is an order $n$ Stein cork.
\end{thm}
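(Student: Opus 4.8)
The plan is to verify the conditions of Definition~\ref{orderndefinition} in turn, treating separately the \emph{existence} of the boundary diffeomorphism, its \emph{order $n$} behaviour, the \emph{non-extension} of its proper powers, and finally the \emph{Stein} refinement. First I would read off from Figure~\ref{beisotopy} that $C_{n,m}$ has a handle decomposition with a single $0$--handle together with equally many $1$-- and $2$--handles, the $2$--handles killing the generators coming from the dotted circles; the resulting presentation of $\pi_1(C_{n,m})$ is then trivial and $H_*(C_{n,m})$ vanishes, so by Whitehead's theorem $C_{n,m}$ is contractible and is a legitimate candidate for the ambient manifold of a cork. The map $\tau_{n,m}^C$ is built by arranging $n$ cyclically symmetric copies of the Akbulut-type attaching data of $(C(m),\tau(m))$ (Figure~\ref{Ack}) on the boundary of the $0$--handle, so that $\tau_{n,m}^C$ is the order-$n$ rotation combined with the local swaps of $\tau(m)$; identifying $\partial C_{n,m}$ as the $n$--fold cyclic branched cover of $\partial C(m)$ along a slice knot makes $\tau_{n,m}^C$ literally the deck transformation, which has order $n$ and restricts to a self-diffeomorphism of $\partial C_{n,m}$.

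For condition (2) I would produce an explicit extension of $(\tau_{n,m}^C)^n$ over $C_{n,m}$: after $n$ applications the rotation returns to the identity and the $n$ local swaps reassemble, so a finite sequence of handle slides and isotopies — the content of Figure~\ref{beisotopy}, as its label suggests — carries the twisted diagram back to the original one, giving a self-diffeomorphism of $C_{n,m}$ inducing $(\tau_{n,m}^C)^n$ on the boundary.

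Condition (1), the non-extension of $(\tau_{n,m}^C)^i$ for $0<i<n$, is the heart of the theorem and the step I expect to be hardest. I would argue by the ambient-exoticity principle: if $(\tau_{n,m}^C)^i$ extended to a diffeomorphism $F$ of $C_{n,m}$, then for any smooth embedding $C_{n,m}\hookrightarrow X$ regluing by $F$ would give $X(C_{n,m},(\tau_{n,m}^C)^i)\cong X$; hence it suffices to exhibit an $X$ for which the $n$ cut-and-paste manifolds $X,\,X(C_{n,m},\tau_{n,m}^C),\,\dots,\,X(C_{n,m},(\tau_{n,m}^C)^{n-1})$ are pairwise non-diffeomorphic, which simultaneously realizes $(C_{n,m},\tau_{n,m}^C)$ as a cork for a collection in the sense of Definition~\ref{orderndefinition}. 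The branched-cover/slice-knot description of $\partial C_{n,m}$ is what makes this computable: the $i$-th twist permutes the spin$^c$ structures and changes the relevant smooth invariant, which I would detect with Seiberg--Witten invariants (or, on the boundary, with the Heegaard Floer correction terms $d(\partial C_{n,m},s)$ together with the constraints imposed by a Stein/symplectic filling). The delicate point is showing these invariants genuinely separate \emph{all} $n$ twists, not merely the identity from the rest, since this requires the cyclic symmetry to act nontrivially on the chosen invariant at each intermediate power.

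Finally, for the Stein conclusion I would invoke Gompf's handlebody criterion: isotope the diagram of Figure~\ref{beisotopy} so that every $2$--handle is attached along a Legendrian knot with framing equal to $\mathrm{tb}-1$ while the dotted circles provide subcritical $1$--handles. The cyclic symmetry lets all $n$ copies be Legendrianized simultaneously for every $m>0$, producing a Stein structure on $C_{n,m}$; combined with (1) and (2) this upgrades $(C_{n,m},\tau_{n,m}^C)$ to an order $n$ Stein cork, as claimed.
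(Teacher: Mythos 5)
Your outline is sound on three of the four points: contractibility of $C_{n,m}$, the trivial extension of $(\tau_{n,m}^C)^n$ (since $(\tau_{n,m}^C)^n=\mathrm{id}$, as in the paper), and the Stein structure via Gompf's criterion all match what the paper does (with the minor caveat that the paper Legendrianizes only after the isotopy of {\sc Figure}~\ref{isotopy2}--\ref{isotopytoStein}, which destroys the manifest cyclic symmetry, so ``simultaneous symmetric Legendrianization'' is not what is actually carried out). But at the heart of the theorem --- non-extension of $(\tau_{n,m}^C)^i$ for $0<i<n$ --- your proposal has a genuine gap: you reduce to exhibiting a single closed $X$ whose $n$ twists $X(C_{n,m},(\tau_{n,m}^C)^i)$ are \emph{pairwise} non-diffeomorphic, to be detected by Seiberg--Witten invariants or correction terms, and you never construct $X$ or perform any computation; you yourself flag the ``delicate point'' as unresolved. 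Worse, this target is strictly stronger than what Definition~\ref{orderndefinition} requires: it would make $(C_{n,m},\tau_{n,m}^C)$ a cork for a collection of $n$ mutually exotic 4-manifolds, a property the paper explicitly states is \emph{not known} (see the remark after Theorem~\ref{thmE} and the Problems section). All that is needed is, for each $i$ separately, some ambient manifold $X_i$ (allowed to depend on $i$) with $X_i(C_{n,m},(\tau_{n,m}^C)^i)\not\cong X_i$.

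The paper's actual mechanism is much more elementary and requires no gauge-theoretic computation. For each $0<i<n$ one attaches a single $-1$-framed 2-handle $h$ along the meridian of the dotted circle $\beta_{n-i}$; the resulting $Z_{n-i}=C_{n,m}\cup_{\beta_{n-i}}h$ is Stein, because a meridian of a 1-handle has a Legendrian representative with $tb=0$, so the framing is $tb-1$. After the twist by $(\tau_{n,m}^C)^i$, the attaching circle of $h$ becomes a meridian of the $0$-framed 2-handle $\beta_0$, and the core of $h$ unites with the compressing disk of that meridian to give a smoothly embedded sphere of self-intersection $-1$ in $Z_{n-i}(C_{n,m},(\tau_{n,m}^C)^i)$. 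Since a Stein 4-manifold contains no embedded $-1$-sphere (Akbulut--Matveyev \cite{AM2}), the twisted manifold is not Stein, hence not diffeomorphic to $Z_{n-i}$, and $(\tau_{n,m}^C)^i$ cannot extend. To repair your proof you should replace the ``find one $X$ distinguishing all powers simultaneously'' step by this per-power construction, which exploits exactly the feature your rotation-of-$C(m)$-copies picture sets up: the $i$-th power of the rotation carries a dotted circle onto a $0$-framed one.
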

\begin{figure}[htbp]
\begin{center}
\includegraphics[width=.5\textwidth]{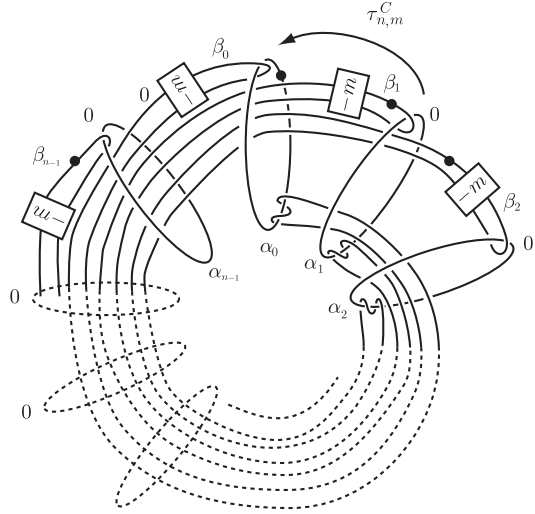}
\caption{The handle decomposition of $C_{n,m}$.}
\label{beisotopy}
\end{center}
\end{figure}
\begin{figure}[htbp]
\begin{center}
\includegraphics[width=.4\textwidth]{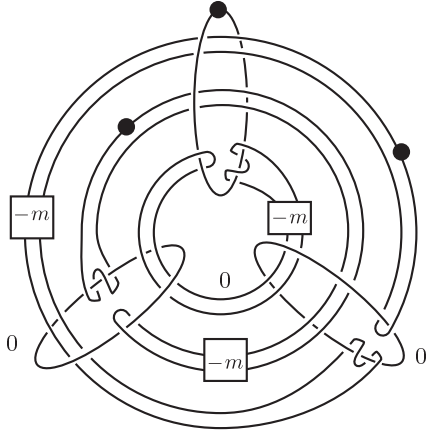}
\caption{The handle decomposition of $C_{3,m}$ after an isotopy.}
\label{defofCn}
\end{center}
\end{figure}
The number $-m$ in any box stands for a $-m$ full twist.

We will define other variations $D_{n,m}$ and $E_{n,m}$ in Section~\ref{OvDE}.
Here we give rough definitions of them.
$D_{n,m}$ is obtained by the exchange of all dots and 0s of $C_{n,m}$.
$E_{n,m}$ is a 4-manifold modified as in {\sc Figure}~\ref{henka} of $C_{n,m}$.
The case of $n=3$ is described as in {\sc Figure}~\ref{elliptic}.
The diffeomorphisms $\tau^D_{n,m}$ and $\tau_{n,m}^E$ are the rotations by angle $2\pi/n$ in the same way as $\tau_{n,m}^C$.

The reason why we treat these examples is to show the existence of many corks {\it without the direct aid of Stein structure}.
In other words, even when we do not know whether $D_{n,m}$ and $E_{n,m}$ are Stein manifolds,
our technique can show that $\tau_{n,m}^D$ and $\tau_{n,m}^E$ cannot extend to inside $D_{n,m}$ and $E_{n,m}$ as diffeomorphisms respectively.
\begin{figure}[htpb]
\begin{center}
\includegraphics[width=.5\textwidth]{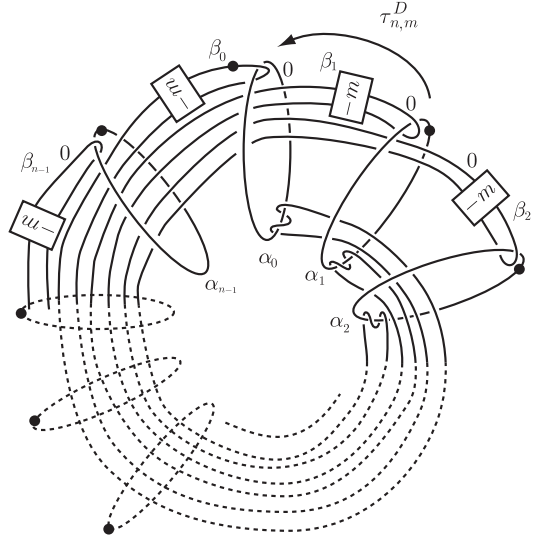}
\caption{The handle decomposition of $D_{n,m}$.}
\label{handleD}
\end{center}
\end{figure}
\begin{figure}[htbp]
\begin{center}
\includegraphics{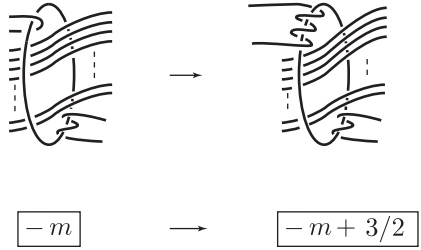}
\caption{A modification of $C_{n,m}$ into $E_{n,m}$.}
\label{henka}
\end{center}
\end{figure}
\begin{figure}[htpb]
\begin{center}\includegraphics[width=.5\textwidth]{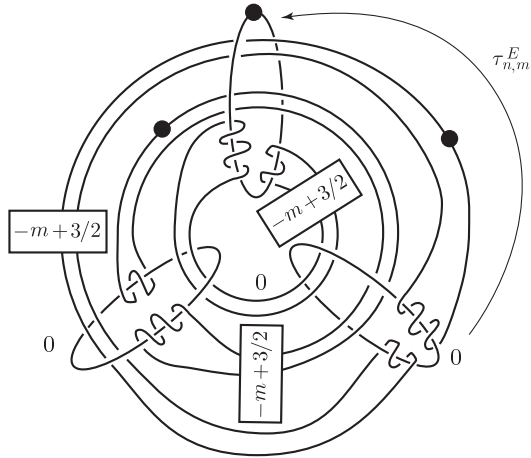}
\caption{Handle decomposition of $E_{3,m}$ and a diffeomorphism $\tau_{n,m}^E$.}
\label{elliptic}
\end{center}
\end{figure}
Here, as examples, we state the second and third main theorems in the form including technical statements.
\begin{thm}[Cork-ness of $(D_{n,m},\tau_{n,m}^D)$]
\label{contra}
Let $n$ be an integer with $n>2$.
Then there exists an embedding $D_{n,m}\subset D_{2,m}$ such that for the embedding, there exists a diffeomorphism $\psi:D_{2,m}\to D_{2,m}(D_{n,m},\tau_{n,m}^D)$ such that the diffeomorphism induces $\tau_{2,m}^D$ on the boundaries.

In particular, $(D_{n,m},\tau_{n,m}^D)$ is an order $n$ cork.
\end{thm}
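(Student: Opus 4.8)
The plan is to reduce the corkness of $(D_{n,m},\tau_{n,m}^D)$ to the order $2$ corkness of $(D_{2,m},\tau_{2,m}^D)$, which I take as the base case (it follows from the order $2$ Stein cork $(C(m),\tau(m))$ of Proposition~\ref{corkorder2} via the dot--$0$ duality defining $D$); in particular $\tau_{2,m}^D$ does not extend to a self-diffeomorphism of $D_{2,m}$. Write $V=D_{n,m}$, $W=D_{2,m}$, $t=\tau_{n,m}^D$ and $s=\tau_{2,m}^D$. The heart of the matter is the promised embedding $V\subset W$ together with the diffeomorphism $\psi\colon W\to W(V,t)$ restricting to $s$ on $\partial W$. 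Granting this, the deduction is short. Suppose some power $t^i$ with $0<i<n$ extended to a diffeomorphism $\Phi\colon V\to V$. Then regluing by $t^i$ can be undone by $\Phi$, so there is a diffeomorphism $\rho\colon W(V,t^i)\to W$ that is the identity on $\partial W$. Composing with the analogue $\psi_i\colon W\to W(V,t^i)$ of $\psi$ yields a self-diffeomorphism $\rho\circ\psi_i$ of $W$ restricting to $s$ on $\partial W$, i.e. an extension of $\tau_{2,m}^D$ over $D_{2,m}$, contradicting order $2$ corkness. Since $t^n=\mathrm{id}$ extends trivially, this gives the order $n$ conclusion.

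To make this work I must produce, for every $0<i<n$, a diffeomorphism $\psi_i\colon W\to W(V,t^i)$ inducing $s$ (and not merely $s^i$) on the boundary; the stated $\psi$ is the case $i=1$. First I would construct the codimension $0$ embedding $D_{n,m}\hookrightarrow D_{2,m}$ at the level of handle diagrams, positioning the $n$ rotated copies of the attaching data of $D_{n,m}$ inside the diagram of $D_{2,m}$ (this is the ``several copies of the attaching spheres'' mechanism announced in the introduction). Next I would perform the cork twist: cut along $\partial V$, reglue by the $2\pi i/n$ rotation $t^i$, and then run Kirby moves, sliding the rotated $2$-handles and cancelling the $1$/$2$-handle pairs created by the dot--$0$ exchange defining $D_{n,m}$, so as to return the diagram to that of $D_{2,m}$. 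Tracking the induced map on the boundary throughout these moves should show that the net effect is exactly the order $2$ rotation $s=\tau_{2,m}^D$, uniformly in $i$, because the ambient diagram only records the rotation up to the symmetry already visible in $D_{2,m}$.

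The main obstacle is precisely this embedding-and-simplification step. Unlike the Stein argument, it cannot be read off from a branched-cover tower: since $n>2$ the cover $\Sigma_n(K)\to\Sigma_2(K)$ does not exist in general, so $V$ does not sit inside $W$ by any covering relation, and the embedding must be exhibited by hand. The delicate points are, first, checking that the handle moves after the $t^i$-twist genuinely terminate at the diagram of $D_{2,m}$ rather than at some other contractible manifold, and second, verifying that the boundary diffeomorphism produced by this sequence of moves is isotopic to $\tau_{2,m}^D$ for every $i$, which is what makes the contradiction available for all intermediate powers and hence yields order exactly $n$. Once the diagrammatic identity $\psi_i$ is in hand, the logical skeleton above closes the proof.
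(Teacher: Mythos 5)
Your logical skeleton---if $t^i=(\tau_{n,m}^D)^i$ extended over $D_{n,m}$, transport the twist through an embedding into a smaller known cork and contradict a non-extension result there---is in the spirit of the paper, and your base case is available ($D_{2,m}=C_{2,m}$ is an order $2$ cork). But the specific reduction you propose has a genuine gap: the diffeomorphisms $\psi_i\colon D_{2,m}\to D_{2,m}(D_{n,m},t^i)$ inducing $\tau_{2,m}^D$ on the boundary \emph{uniformly in $i$} do not exist for this embedding, and the paper never claims them. Concretely, the embedding $D_{n,m}\subset D_{2,m}$ comes from the deleting map $(0,\ast,\cdots,\ast)\mapsto(0,\ast)$ of Lemma~\ref{pcork}: $D_{2,m}$ is obtained from $D_{n,m}$ by attaching $0$-framed meridional $2$-handles (and $3$-handles) cancelling the pairs $\{\alpha_j,\beta_j\}$ for $2\le j\le n-1$. (Note this is the opposite of your picture of ``positioning the diagram of $D_{n,m}$ inside the diagram of $D_{2,m}$''---$D_{n,m}$ has more handles, and the complement of the embedding is a homology cobordism.) Twisting by $t^i$ replaces the $\{\ast,0\}$-pattern by its shift $S_i$, which moves the unique $0$ to position $i$. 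Only for $i=1$ (up to the rotation convention) does the $0$ land on a surviving pair: then the external handles cancel against $\ast$-pairs as before, the diagram terminates at $X_{2,m}(\ast,0)$, and the $\pi$-rotation identifies it with $D_{2,m}=X_{2,m}(0,\ast)$, so that $\psi$ induces $\tau_{2,m}^D$; that is exactly the first statement of the theorem. For $1<i<n$ the $0$ sits at an external-handle position, that pair is cancelled by the moves of {\sc Figure}~\ref{0move}, and the diagram terminates at $X_{2,m}(\ast,\ast)$, not at $D_{2,m}$; no identification inducing $\tau_{2,m}^D$ is available. Indeed, keeping two adjacent pairs whose labels read $(0,\ast)$ before the shift and $(\ast,0)$ after it forces $i=1$, so the step you yourself flag as the main obstacle is not merely delicate but false for this embedding.

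The paper closes the ``in particular'' (order $n$) statement by an $i$-dependent reduction with a different target: since $(0,\ast,\cdots,\ast)$ has period $n$, Corollary~\ref{allprecork} applies, and for each $0<i<n$ one chooses a position at which the shift $S_i$ changes the label, cancels \emph{all other} pairs to obtain an embedding $D_{n,m}\subset X_{1,m}(\cdot)=C(m)$, and observes that the twist by $t^i$ exchanges the dot and the $0$ on the single surviving pair, i.e.\ acts as $\tau(m)$ on $C(m)$. An extension of $t^i$ over $D_{n,m}$ would then extend $\tau(m)$ over $C(m)$, contradicting Proposition~\ref{corkorder2}; and $t^n$ extends since $S_n$ fixes the sequence. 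So your deduction $\rho\circ\psi_i$ is sound but only usable at $i=1$; for general $i$ you must drop from the two-pair cork $(D_{2,m},\tau_{2,m}^D)$ to the one-pair cork $(C(m),\tau(m))$, which is precisely what Lemma~\ref{pcork} and Corollary~\ref{allprecork} accomplish.
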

Note that the first statement does not mean the induced diffeomorphism $\partial D_{2,m}\to \partial D_{2,m}$ can extend to inside a diffeomorphism.
The induced map is the restriction of a rotation of $D_{2,m}-D_{n,m}$ of $\psi$ to the one component of the boundary.
\begin{thm}[Cork-ness of $(E_{n,m},\tau_{n,m}^E)$]
\label{thmE}
For a positive integers $n,m$ there exists a sufficient large integer $l$ and an embedding $E_{n,m}\hookrightarrow V_{l,n}:=E(l)\#n\overline{{\mathbb C}P^{2}}$ such that for any $0<i<n$ the twist $V_{l,n}(E_{n,m},(\tau_{n,m}^E)^i)$ is diffeomorphic to $(2l-1){\mathbb C}P^2\#(10l+n-1)\overline{{\mathbb C}P^2}$.

In particular, $E_{n,m}$ is an order $n$ cork.
\end{thm}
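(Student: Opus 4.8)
The plan is to realize $E_{n,m}$ as an explicit sub-handlebody of a convenient Kirby diagram for $V_{l,n}=E(l)\#n\overline{{\mathbb C}P^2}$, to show by handle calculus that every nontrivial twist dissolves the diagram into the standard connected sum, and then to detect the twist by Seiberg--Witten invariants. First I would fix a handle decomposition of $E(l)$ adapted to the cyclic symmetry of $E_{n,m}$: starting from the standard picture of the elliptic surface (a cusp/nucleus neighborhood together with the remaining handles), I would blow up $n$ times and arrange the $n$ exceptional $\overline{{\mathbb C}P^2}$'s so that the attaching link acquires the $\mathbb Z/n$ symmetry underlying $\tau_{n,m}^E$ (cf.\ Figures~\ref{henka} and \ref{elliptic}). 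The role of taking $l$ \emph{sufficiently large} is precisely to guarantee enough sections/spheres in $E(l)$ to place the $n$ symmetric copies of the attaching data of $E_{n,m}$ disjointly, providing the room needed for the later cancellations. In this picture the embedding $E_{n,m}\hookrightarrow V_{l,n}$ is visible, and the twist $(\tau_{n,m}^E)^i$ is realized as the rotation of its attaching region by angle $2\pi i/n$.

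The technical core is the Kirby calculus establishing
\[
V_{l,n}\bigl(E_{n,m},(\tau_{n,m}^E)^i\bigr)\cong(2l-1){\mathbb C}P^2\#(10l+n-1)\overline{{\mathbb C}P^2}
\]
for every $0<i<n$. After regluing by the rotation, I would slide handles so that each box framing (the $-m$ full twists) is undone against the neighboring exceptional curves and each resulting $1$--$2$ handle canceling pair is removed, successively dissolving the diagram. The fact that the answer is independent of $i$ reflects that the cyclic symmetry forces every nontrivial rotation into the same cancellation pattern, so I would present the case $i=1$ in full and deduce the remaining cases from the symmetry. Counting the surviving handles pins down the intersection form as $(2l-1)\langle 1\rangle\oplus(10l+n-1)\langle -1\rangle$, and the explicit cancellations exhibit an honest diffeomorphism onto the standard connected sum above.

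Finally, I would separate the twisted manifold from $V_{l,n}$ itself by gauge theory. For $l\ge 2$ the surface $E(l)$ has nonzero Seiberg--Witten basic classes, and by the blow-up formula these persist in $V_{l,n}=E(l)\#n\overline{{\mathbb C}P^2}$, so $V_{l,n}$ carries a nontrivial Seiberg--Witten invariant. On the other hand $(2l-1){\mathbb C}P^2\#(10l+n-1)\overline{{\mathbb C}P^2}$ is a connected sum in which both summands have $b_2^+>0$ (splitting off one ${\mathbb C}P^2$ leaves a piece with $b_2^+=2l-2\ge 1$), hence its Seiberg--Witten invariants vanish. Therefore, for each $0<i<n$, the manifold $V_{l,n}(E_{n,m},(\tau_{n,m}^E)^i)$ is not diffeomorphic to $V_{l,n}$, which shows that $(\tau_{n,m}^E)^i$ cannot extend to a self-diffeomorphism of $E_{n,m}$ (condition (1) of Definition~\ref{orderndefinition}). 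Since $\tau_{n,m}^E$ is a rotation by $2\pi/n$, its $n$-th power is the identity and trivially extends (condition (2)), so $(E_{n,m},\tau_{n,m}^E)$ is an order $n$ cork.

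The main obstacle I expect is the middle step: producing a single symmetric Kirby diagram of $V_{l,n}$ in which the embedded $E_{n,m}$ is recognizable and in which, after the rotation, all the handle slides and cancellations can be carried out explicitly and uniformly in $i$. The gauge-theoretic separation and the order-$n$ bookkeeping are routine once that diffeomorphism is in hand.
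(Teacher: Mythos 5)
Your proposal follows essentially the same route as the paper's proof: realize $E_{n,m}$ inside $V_{l,n}=E(l)\#n\overline{{\mathbb C}P^2}$ via an explicit handle picture (the paper attaches $2$-handles as in its Figure~\ref{handlenejire3} and invokes Figure~9.4 of \cite{GS}, with $l\ge\lceil\frac{2n+1}{3}\rceil$ playing your ``sufficiently large $l$'' role), show each nontrivial twist lets one slide out a split ${\mathbb C}P^2$ summand so the manifold dissolves to $(2l-1){\mathbb C}P^2\#(10l+n-1)\overline{{\mathbb C}P^2}$ via the argument of Exercise~9.3.4 in \cite{GS}, and conclude non-extendability from $V_{l,n}\not\cong(2l-1){\mathbb C}P^2\#(10l+n-1)\overline{{\mathbb C}P^2}$, which is exactly the Seiberg--Witten vanishing/nonvanishing dichotomy you make explicit. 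The only real difference is that what you flag as the main obstacle---the explicit, $i$-uniform Kirby calculus in a symmetric diagram---is exactly what the paper outsources to the cited standard moves of Gompf--Stipsicz (the handle slide creating the ${\mathbb C}P^2$ summand plus the separated-${\mathbb C}P^2$ formula) rather than carrying out from scratch.
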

The integer $l$ in this theorem is independent of $m$.
The point is that all the nontrivial twists $(E_{n,m}, (\tau_{n,m}^E)^i)$ for an embedding $E_{n,m}\hookrightarrow V_{l,n}$ produces exotic structures.
Whether there exists a finite order cork for the collection of mutually exotic 4-manifolds is not known yet.
\subsection{More exotic 4-manifolds}
\label{more}
In Section~\ref{exo}, we give a 4-manifold $W_{n,m}$ and embedding $C_{n,m}\hookrightarrow W_{n,m}$.
The cork twist is a candidate of the collection of mutually exotic 4-manifolds.
\begin{prop}
\label{collexo}
Let $i$ be an integer with $0< i\le n-1$.
There exists a simply-connected non-spin 4-manifold $W_{n,m,i}$ with $b_2=b^-=n(n-1)/2$ and a homology sphere boundary.
The manifold $W_{n,m,i}$ is obtained by an order $n$ cork twist of a Stein manifold $W_{n,m}$.
Each $W_{n,m,i}$ is the $i$ times blow-ups of a 4-manifold $W_{n,m,i}'$ and is exotic to $W_{n,m}$.
\end{prop}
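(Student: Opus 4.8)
The strategy is to build $W_{n,m}$ as a Stein manifold by attaching two--handles to the order $n$ Stein cork $(C_{n,m},\tau_{n,m}^C)$ of Theorem~\ref{main}, in a configuration invariant under the order $n$ rotation that defines $\tau_{n,m}^C$. First I would take the handle picture of $C_{n,m}$ from {\sc Figure}~\ref{beisotopy} and attach $n(n-1)/2$ two--handles along Legendrian realizations of a rotationally symmetric system of circles on $\partial C_{n,m}$, each with framing one less than its Thurston--Bennequin number; by Eliashberg's criterion the result $W_{n,m}$ is then Stein. I would next read the linking matrix of these two--handles directly from the diagram and check that it is negative definite of rank $n(n-1)/2$, has determinant $\pm 1$, and has an odd diagonal entry. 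This yields at once $b_2=b^-=n(n-1)/2$, that $\partial W_{n,m}$ is a homology sphere, and that $W_{n,m}$ is non--spin; simple--connectivity is immediate because the two--handles are attached to the contractible cork $C_{n,m}$.

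The heart of the argument is to identify the cork twists. For $0<i\le n-1$ I would compute $W_{n,m,i}:=W_{n,m}(C_{n,m},(\tau_{n,m}^C)^i)$ by Kirby calculus. Applying $(\tau_{n,m}^C)^i$ permutes the attaching circles of the two--handles, and I expect that after a controlled sequence of handle slides exactly $i$ of them are carried onto mutually split, $(-1)$--framed unknots, while the remaining diagram presents a 4--manifold $W_{n,m,i}'$. This produces a diffeomorphism $W_{n,m,i}\cong W_{n,m,i}'\#\,i\,\overline{{\mathbb C}P^2}$, exhibiting $W_{n,m,i}$ as the $i$--fold blow--up claimed. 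Since the cork twist is supported in the interior, $\partial W_{n,m,i}=\partial W_{n,m}$ and the intersection form is unchanged, so $W_{n,m,i}$ retains $b_2=b^-=n(n-1)/2$, simple--connectivity, non--spinness, and the same homology--sphere boundary. I expect this handle bookkeeping---tracking the rotated attaching circles and verifying that precisely $i$ exceptional spheres split off---to be the main technical obstacle, together with the check that the Stein framings used in the first step are genuinely realized by Legendrian representatives.

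Finally I would establish that $W_{n,m,i}$ is exotic to $W_{n,m}$ for $i\ge 1$. Since $C_{n,m}$ is contractible, Freedman's theorem \cite{F} extends $(\tau_{n,m}^C)^i$ to a self--homeomorphism of $C_{n,m}$, so regluing does not alter the homeomorphism type and $W_{n,m,i}$ is homeomorphic to $W_{n,m}$. To rule out a diffeomorphism I would use that $W_{n,m}$, being Stein, embeds (Lisca--Mati\'c) into a closed minimal K\"ahler surface $\hat W$ of general type with $b_2^+>1$; as $\partial W_{n,m}$ is a homology sphere, $H_2(W_{n,m})$ injects into $H_2(\hat W)$. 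A minimal surface of general type has Seiberg--Witten basic classes exactly $\pm K$ with $K^2>0$, so by the blow--up formula it can contain no smoothly embedded $(-1)$--sphere (one would force $K^2=-1$); hence $W_{n,m}$ carries no homologically essential $(-1)$--sphere either. On the other hand each exceptional sphere of the blow--up is a homologically essential $(-1)$--sphere in $W_{n,m,i}$. A diffeomorphism $W_{n,m,i}\cong W_{n,m}$ would transport such a sphere into $W_{n,m}$, a contradiction. Therefore $W_{n,m,i}$ and $W_{n,m}$ are homeomorphic but not diffeomorphic, which completes the proof.
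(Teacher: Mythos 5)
Your overall skeleton is the same as the paper's: attach $n(n-1)/2$ two-handles with framing $-1$ to the cork $C_{n,m}$, observe that the twist by $(\tau_{n,m}^C)^i$ carries some of them onto meridians of the $0$-framed handle $\beta_0$ so that $i$ exceptional spheres split off, and conclude exoticness from minimality of the Stein manifold $W_{n,m}$ together with Freedman. (For the last step the paper simply quotes the fact that a Stein $4$-manifold contains no embedded $-1$-sphere \cite{AM2}, where your Lisca--Mati\'c/Seiberg--Witten argument is essentially a proof of that fact, so that part is fine if heavier than necessary.)

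However, there is a genuine gap at the heart of your construction: you stipulate that the two-handles are attached ``in a configuration invariant under the order $n$ rotation,'' and this is both unachievable and self-defeating. The paper's configuration is deliberately \emph{asymmetric}: for each $1\le i\le n-1$ one attaches exactly $i$ parallel $-1$-framed meridians of $\beta_i$, and none on $\beta_0$ (so the total is $\sum_{i=1}^{n-1} i = n(n-1)/2$). Then the twist $(\tau_{n,m}^C)^i$ carries precisely the $i$ meridians of $\beta_i$ onto meridians of the $0$-framed $\beta_0$, and each such $-1$-framed meridian caps off with the cocore of the $0$-framed handle to an embedded $-1$-sphere, yielding $W_{n,m,i}=W_{n,m,i}'\#\,i\,\overline{{\mathbb C}P^2}$. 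A genuinely $\tau$-invariant configuration cannot do this: first, for even $n$ the number $n(n-1)/2$ is not divisible by $n$, so no freely rotation-symmetric family of that many circles exists; second, symmetry would force $-1$-framed circles on $\beta_0$ itself, and a $-1$-framed meridian of the $0$-framed $\beta_0$ already produces an embedded $-1$-sphere in $W_{n,m}$, contradicting the Steinness/minimality on which your final step (and the paper's) depends; third, for an invariant configuration the number of circles carried onto meridians of $\beta_0$ by $(\tau_{n,m}^C)^i$ would be the same for every $i$, so you could never obtain ``exactly $i$'' split exceptional spheres. The handle bookkeeping you deferred as ``the main technical obstacle'' is thus not a technicality --- the asymmetric placement of the meridians, with multiplicity $i$ on $\beta_i$, is exactly the design that makes the count come out to $i$, and your invariance ansatz would make the claimed splitting false.
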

We do not know whether $W_{n,m,i}'$ is a minimal 4-manifold.
\begin{rmk}
In the definition of order $n$ cork in \cite{TM1}, we imposed the condition that the order of $t$ is $n$.
Here we slightly change it to the weaker condition (2) in Definition~\ref{orderndefinition}.
\end{rmk}
This remark says that the order of $t$ as a map is ``not'' necessary to be the same as the order of cork.
In the last section we will illustrate an example having the difference.
\begin{prop}
\label{exm1}
Let $(F,\kappa)$ be a pair of a 4-manifold and diffeomorphism as in {\sc Figure}~\ref{orderdiff}.
The map $\kappa$ on $\partial F$ has order 2 as a cork, however it is an order $4$ as a diffeomorphism.
\end{prop}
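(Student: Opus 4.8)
The plan is to establish the two claims separately: that $\kappa$ has order $2$ as a cork (meaning $\kappa$ is a genuine cork twist but $\kappa^2$ extends to a diffeomorphism of $F$), and that $\kappa$ has order $4$ as a self-diffeomorphism of $\partial F$ (meaning $\kappa^4=\mathrm{id}$ on $\partial F$ but $\kappa^2\neq\mathrm{id}$ there). The second assertion is the more concrete one, so I would dispatch it first. Reading off Figure~\ref{orderdiff}, the map $\kappa$ should be presented as a rotation-type symmetry of the handle diagram, and I would verify directly from the geometry of the attaching link that iterating the rotation four times returns the boundary identification to the identity (up to isotopy), while $\kappa^2$ is visibly the nontrivial ``antipodal'' identification and is \emph{not} isotopic to the identity on $\partial F$. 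This is a handle-calculus computation on the boundary alone and should not present any real difficulty.

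For the cork order, the two directions are asymmetric in difficulty. The easy direction is showing $\kappa^2$ extends over $F$: here I would exhibit an explicit diffeomorphism of the handle diagram of $F$ realizing $\kappa^2$ on the boundary, most naturally by finding a $180^\circ$ rotational symmetry of the full diagram (not merely of the attaching spheres) that restricts to $\kappa^2$ on $\partial F$. Because $\kappa^2$ sits ``deeper'' in the symmetry group, the relevant symmetry should be one that genuinely permutes the handles and extends inward, and the point of the example is precisely that this extension exists even though $\kappa$ itself does not extend. I would check that the symmetry carries dotted circles to dotted circles and framed components to framed components with matching framings, so that it is a diffeomorphism of the total space inducing $\kappa^2$.

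The hard direction, and the main obstacle, is showing that $\kappa$ itself does \emph{not} extend to a diffeomorphism of $F$, i.e.\ that $(F,\kappa)$ is a genuine cork. For this I would reuse the obstruction technique advertised in the introduction and applied to $C_{n,m}$, $D_{n,m}$ and $E_{n,m}$: embed $F$ into an appropriate ambient $4$-manifold and show that the single cork twist $F(F,\kappa)$ (equivalently, the effect of $\kappa$ on the ambient manifold) changes the smooth structure, so that $\kappa$ cannot extend. Concretely, I would look for an embedding into a Stein or elliptic-surface type filling, as in Theorem~\ref{thmE}, such that one twist produces an exotic copy detected by a gauge-theoretic invariant (Seiberg--Witten or the Ozsv\'ath--Szab\'o/Heegaard Floer correction terms used for the $C_{n,m}$ examples), while $\kappa^2$ returns to the original structure. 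The delicate part is arranging that the single twist already produces the exotic change: one must ensure the embedding is chosen so the rotation $\kappa$, rather than its square, is the one seen by the ambient manifold. This is exactly where the distinction between ``order as a diffeomorphism'' and ``order as a cork'' is forced, and verifying it is the crux of the proof.
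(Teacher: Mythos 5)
Your two ``easy'' steps do match what the paper needs. The order-$4$ claim is immediate: $\kappa=\tau_{2,m}^F$ is literally the rotation by $\pi/2$, so $\kappa^4=\mathrm{id}$ and $\kappa^2\neq\mathrm{id}$ as maps; note that ``order as a diffeomorphism'' here means the order of the map itself (cf.\ the remark after Proposition~\ref{collexo}), so your plan to prove $\kappa^2$ is not \emph{isotopic} to the identity is both unnecessary and substantially harder than anything the paper does. Likewise your extension of $\kappa^2$ via a $180^\circ$ symmetry of the full diagram is exactly right: since $F=F_{2,m}=X_{4,m}(0,\ast,0,\ast)$ by Definition~\ref{CDF} and the sequence $(0,\ast,0,\ast)$ has period $2$, the rotation by $\pi$ carries dotted circles to dotted circles and $0$-framed handles to $0$-framed handles, hence extends inward.

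The genuine gap is in the crux, the non-extension of $\kappa$, which you only sketch — and the sketch points in an unpromising direction. A direct Stein obstruction on $F$ is unavailable, because $F_{2,m}=X_{4,m}(0,\ast,0,\ast)$ is not shown to be Stein anywhere in the paper (Steinness of general $X_{n,m}({\bf x})$ is explicitly raised as an open problem in the last section), and constructing an ambient gauge-theoretic embedding \`a la Theorem~\ref{thmE} is precisely the hard work you acknowledge (``arranging that the single twist already produces the exotic change'') but never carry out. The paper's actual proof is a two-line application of machinery already established: since $(0,\ast,0,\ast)$ is a $\{\ast,0\}$-sequence of period $2$, Corollary~\ref{allprecork} gives directly that $(F,\kappa)$ is an order-$2$ cork. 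Behind that corollary sits Lemma~\ref{pcork}: by attaching $2$- and $3$-handles one cancels three of the four $\{\alpha_i,\beta_i\}$ pairs, producing an embedding $F\subset X_{1,m}=C(m)$ under which the twist by $\kappa$ swaps the dot and the $0$ on the surviving pair — i.e., induces the Akbulut cork twist $\tau(m)$ on $C(m)$ — and $\tau(m)$ does not extend by Proposition~\ref{corkorder2}. The missing idea in your proposal is thus recognizing $F$ inside the $X_{n,m}({\bf x})$ framework and reducing, by handle cancellation, to the already-proven order-$2$ cork $C(m)$; this replaces your unexecuted gauge-theoretic plan with an elementary embedding argument, which is exactly the ``technique without the direct aid of Stein structure'' the paper advertises.
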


\subsection{An action on Heegaard Floer homology.}
S. Akbulut and C. Karakurt in \cite{AK} showed that the twist map $t$ of an order $2$ Stein cork $(\mathcal{C},t)$ 
consisting of a symmetric diagram consisting of a dotted 1-handle and 
a 0-framed 2-handle induces an action on the Heegaard Floer homology $HF^+(\partial \mathcal{C})$ non-trivially as an involution.
We show that an order $n$ Stein cork $(C_{n,m},\tau_{n,m}^C)$ also induces an order $n$ map on Heegaard Floer homology $HF^+(\partial C_{n,m})$.
\begin{thm}
\label{action}
Let $n,m$ be positive integers with $n>1$.
The maps $\{(\tau_{n,m}^C)^i|i=0,\cdots,n-1\}$, which is isomorphic to ${\mathbb Z}/n{\mathbb Z}$, act effectively on the Heegaard Floer homology $HF^+(\partial C_{n,m})$.
\end{thm}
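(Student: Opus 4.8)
The plan is to detect the $\mathbb{Z}/n\mathbb{Z}$-action through the Ozsv\'ath--Szab\'o contact invariants of the $n$ Stein-fillable contact structures produced by the cork twist, extending the order-$2$ argument of Akbulut--Karakurt \cite{AK} to the cyclic setting. Write $Y=\partial C_{n,m}$ and let $\xi_0$ be the contact structure induced by the Stein structure of $C_{n,m}$. The twist $\tau=\tau_{n,m}^C$ produces the contact structures $\xi_i:=(\tau^i)_\ast\xi_0$ for $i=0,\dots,n-1$, each of which is again Stein-fillable. Since $\tau$ is the geometric rotation by angle $2\pi/n$, we have $\tau^n=\mathrm{id}$ up to isotopy, so the induced maps on Heegaard Floer homology satisfy $\tau_\ast^{n}=\mathrm{id}$ and genuinely give a $\mathbb{Z}/n\mathbb{Z}$-action; effectiveness is precisely the assertion that $\tau_\ast^{i}\neq\mathrm{id}$ for $0<i<n$.

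First I would record the permutation behaviour of the contact classes. By functoriality of the contact invariant under the contactomorphism $\tau\colon (Y,\xi_i)\to (Y,\xi_{i+1})$, the induced map on $\widehat{HF}(-Y)$ satisfies
\[\tau_\ast\, c(\xi_i)=c(\xi_{i+1}),\qquad i\in\mathbb{Z}/n\mathbb{Z}.\]
Hence $\tau_\ast$ cyclically permutes $c(\xi_0),\dots,c(\xi_{n-1})$, and if these classes are pairwise distinct then $\tau_\ast^{i}=\mathrm{id}$ would force $c(\xi_0)=c(\xi_i)$, a contradiction, giving effectiveness at once.

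The crux, and the main obstacle, is to prove that the $n$ contact classes are distinct. Because all the $\xi_i$ are homotopic as plane fields, their classes lie in a single $\mathrm{spin}^c$ structure and a single grading, so they cannot be separated by the homotopy-theoretic ($d_3$) data; one must instead establish pairwise inequality inside one graded summand of $\widehat{HF}(-Y)$. To do this I would construct a Heegaard diagram for $Y$ adapted to the cyclic symmetry, using the presentation of $Y$ as an $n$-fold cyclic branched cover of a slice knot in $\partial C(m)$ together with the Stein handle picture of Figure~\ref{beisotopy}. In such a symmetric diagram the twist symmetry permutes the generators, the contact generator for $\xi_0$ is pinned down by the standard recipe (Honda--Kazez--Mati\'c, Plamenevskaya), and its $\tau$-images realize the remaining $c(\xi_i)$; distinctness then reduces to showing these represent different nonzero classes in the relevant graded piece. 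The cyclic symmetry lets one compare only $c(\xi_0)$ against each $c(\xi_i)$, localizing the computation to a form that parallels the order-$2$ comparison of Akbulut--Karakurt carried out on each pair of ``sides'' of the diagram.

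Finally I would transfer the conclusion from $\widehat{HF}(-Y)$ to $HF^+(\partial C_{n,m})$. The maps induced by $\tau$ on the several flavors are intertwined by the $U$-action exact sequence and the duality $\widehat{HF}(-Y)\cong\widehat{HF}(Y)^\ast$, so a five-lemma argument shows that triviality of $\tau_\ast^{i}$ on $HF^+(Y)$ would force triviality on $\widehat{HF}(-Y)$; contrapositively, the nontrivial permutation of the contact classes forces $\tau_\ast^{i}\neq\mathrm{id}$ on $HF^+(\partial C_{n,m})$ for $0<i<n$. Combined with $\tau_\ast^{n}=\mathrm{id}$, this yields the asserted effective $\mathbb{Z}/n\mathbb{Z}$-action.
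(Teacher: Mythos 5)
There is a genuine gap at exactly the point you flag as ``the crux.'' Showing that the classes $c(\xi_0),\dots,c(\xi_{n-1})$ are pairwise distinct \emph{is} the entire content of the theorem, and your proposal defers it to a hoped-for computation in a cyclically symmetric Heegaard diagram without supplying any mechanism that could distinguish the classes. As you yourself observe, all the $\xi_i$ are homotopic plane fields, so the classes sit in one spin$^c$ structure and one grading; the symmetry of the diagram merely permutes them, which is perfectly consistent with their all being equal, and no combinatorial identification of the contact generators by the Honda--Kazez--Mati\'c recipe will by itself show two of them represent different homology classes. You also mischaracterize Akbulut--Karakurt: their order-$2$ argument, which the paper generalizes verbatim, is not a diagram comparison but a capping-and-adjunction argument. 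Concretely, the paper attaches a $2$-handle $H$ with framing $+1$ along a Legendrian trefoil linking $\beta_{n-i}$, so that $U_i=C_{n,m}\cup_{\beta_{n-i}}H$ is Stein; a Lisca--Mati\'c concave filling $V$ then closes this up to a symplectic $4$-manifold $X_i=C_{n,m}\cup V$ with $b_2^+>1$, and Plamenevskaya's identity $F^{\mathrm{mix}}_{V,\mathfrak{s}}(\Theta^-_{(-2)})=\pm c^+(\xi)$ gives $F^+_{C_{n,m},\mathfrak{s}_0}\bigl(c^+(\xi)\bigr)=\pm\Theta^+_{(0)}\neq 0$. In the twisted gluing $X_i'=C_{n,m}\cup_{\tau_i}V$, however, the twist carries the trefoil circle onto a curve bounding in the $0$-framed side, producing an embedded torus of self-intersection $+1$; the Ozsv\'ath--Szab\'o adjunction inequality then forces all mixed invariants of $X_i'$ to vanish, whence $F^+_{C_{n,m},\mathfrak{s}_0}\bigl(\tau_i^\ast c^+(\xi)\bigr)=0$. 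It is this cobordism map, not the diagram, that separates $c^+(\xi)$ from $\tau_i^\ast c^+(\xi)$; without something playing the role of this capping-and-adjunction step your outline cannot reach the conclusion.

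A secondary flaw is the transfer from $\widehat{HF}(-Y)$ to $HF^+(\partial C_{n,m})$. The five lemma applied to the $U$-exact sequence can only show that an induced map is an \emph{isomorphism}, never that it is the \emph{identity}: if $\tau^i_\ast=\mathrm{id}$ on $HF^+(Y)$, then with respect to the extension $0\to\operatorname{coker}U\to\widehat{HF}(Y)\to\ker U\to 0$ the induced map on $\widehat{HF}$ is only unipotent, not necessarily trivial, so your contrapositive does not follow as stated. The detour is avoidable: the invariant $c^+(\xi)$ already lives in the plus flavor, and the paper works with it directly, so no flavor-transfer argument is needed at all.
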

An action of a group $G$ on a set $S$ is called {\it effective}, if for any $e\neq g\in G$ there exists an element $x\in S$ such that $g\cdot x\neq x$.
From this theorem, we immediately the following proposition.
\begin{prop}
\label{contact}
There exist $n$ Stein filling contact structures on $\partial C_{n,m}$ such that they are homotopic and contactomorphic but non-isotopic each other.
\end{prop}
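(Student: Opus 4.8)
The plan is to realize the $n$ desired contact structures as the successive cork twists of a single Stein fillable one, and then to separate them using the Ozsv\'ath--Szab\'o contact invariant together with Theorem~\ref{action}. Since $(C_{n,m},\tau_{n,m}^C)$ is a Stein cork by Theorem~\ref{main}, the Stein structure $J$ on $C_{n,m}$ induces a contact structure $\xi_0=\xi_J$ on the integral homology sphere $\partial C_{n,m}$, and $\xi_0$ is Stein fillable by $(C_{n,m},J)$. For $i=0,\dots,n-1$ I would set $\xi_i:=((\tau_{n,m}^C)^i)^{\ast}\xi_0$. By construction each $\xi_i$ is contactomorphic to $\xi_0$, via the self-diffeomorphism $(\tau_{n,m}^C)^i$ of $\partial C_{n,m}$, so in particular all the $\xi_i$ are mutually contactomorphic; and since Stein fillability is preserved under contactomorphism, every $\xi_i$ is again Stein fillable. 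This settles the \emph{Stein filling} and \emph{contactomorphic} clauses simultaneously.

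For the homotopy statement I would use that $\partial C_{n,m}$ is a homology sphere, hence carries a unique spin$^c$ structure, so that homotopy classes of oriented $2$-plane fields are completely classified by Gompf's three-dimensional invariant $\theta\in\mathbb{Q}$. Because $\theta$ is natural under orientation-preserving diffeomorphisms and each $\tau_{n,m}^C$ is an orientation-preserving rotation by $2\pi/n$, one gets $\theta(\xi_i)=\theta(((\tau_{n,m}^C)^i)^{\ast}\xi_0)=\theta(\xi_0)$ for all $i$. As the spin$^c$ structures also coincide, the $\xi_i$ are pairwise homotopic as plane fields.

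The decisive point is non-isotopy, and here I would invoke the contact class $c(\xi)\in HF^+(-\partial C_{n,m})$, which is an isotopy invariant satisfying the naturality $c(\phi^{\ast}\xi)=\phi^{\ast}c(\xi)$ for a diffeomorphism $\phi$. Thus $c(\xi_i)=((\tau_{n,m}^C)^i)^{\ast}c(\xi_0)$, so $\{c(\xi_i)\}$ is precisely the orbit of $c(\xi_0)$ under the $\mathbb{Z}/n\mathbb{Z}$-action of Theorem~\ref{action}. If $\xi_i$ and $\xi_j$ were isotopic then $c(\xi_i)=c(\xi_j)$, i.e. $(\tau_{n,m}^C)^{i-j}$ would fix $c(\xi_0)$; hence it suffices to show that $c(\xi_0)$ has trivial stabilizer, so that its orbit has exactly $n$ elements. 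I expect this to be the genuine obstacle: the bare \emph{effectiveness} asserted in Theorem~\ref{action} only guarantees that each nontrivial power moves \emph{some} class, whereas I need the specific class $c(\xi_0)$ to be that witness. This should come directly out of the proof of Theorem~\ref{action} rather than its statement: since $(C_{n,m},J)$ is Stein we have $c(\xi_0)\neq 0$ by the non-vanishing theorem, and the natural way to prove effectiveness in the spirit of Akbulut--Karakurt is exactly to exhibit the $n$ contact elements $c(\xi_i)$ and check that they are distinct. Granting that the construction behind Theorem~\ref{action} produces $n$ distinct contact classes, the $\xi_i$ are $n$ homotopic, contactomorphic, pairwise non-isotopic Stein fillable contact structures, which proves the proposition.
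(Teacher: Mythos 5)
Your proposal is correct and follows essentially the same route as the paper: define $\xi_i=(\tau_{n,m}^C)^{i\ast}\xi$, get contactomorphisms from the rotations, homotopy from Gompf's $3$-dimensional invariant on the homology sphere $\partial C_{n,m}$, and non-isotopy from distinctness of the contact invariants $c^+(\xi_i)$. The ``genuine obstacle'' you flag is indeed discharged exactly as you anticipate: the paper's proof of Theorem~\ref{action} shows $F^+_{C_{n,m},\frak{s}_0}(c^+(\xi))=\pm\Theta^+_{(0)}\neq 0$ while $F^+_{C_{n,m},\frak{s}_0}(\tau_i^\ast(c^+(\xi)))=0$ for every $0<i<n$, so the specific class $c^+(\xi)$ has trivial stabilizer and its orbit consists of $n$ distinct elements, just as your orbit--stabilizer reduction requires.
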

\section*{Acknowledgements}
This topic was come upon by the author while speaking with Masatsuna Tsuchiya on November in 2014.
I thank him for arguing with me.
I gave a talk about finite order corks in the Handle friendship seminar in Tokyo Institute of Technology
and also thank the audience for giving the helpful comments.
I also thank Kouichi Yasui and Tetsuya Abe for giving me useful comments in the earlier version.
\section{A 4-manifold $X_{n,m}({\bf x})$ for a $\{\ast,0\}$-sequence.}
\subsection{Extendability of a composite boundary diffeomorphism.}
\label{ex}
We defined the order of cork in Section~\ref{intro}.
The definition is slightly different from that defined in \cite{TM1} as mentioned in Section~\ref{more}.
To note the well-definedness of the order of cork, we show the following fundamental lemma.
\begin{lem}
Let $X$ be a manifold and $\varphi,\psi$ boundary self-diffeomorphisms $\partial X\to \partial X$.
If $\varphi,\psi$ extend to inside $X\to X$ as a diffeomorphism, then so does $\varphi\circ \psi$.
\end{lem}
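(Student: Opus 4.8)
The plan is entirely formal: I would produce the required extension of $\varphi\circ\psi$ by simply composing the two given extensions. First I fix diffeomorphisms $\widetilde\varphi,\widetilde\psi\colon X\to X$ realizing the hypothesis, that is, with $\widetilde\varphi|_{\partial X}=\varphi$ and $\widetilde\psi|_{\partial X}=\psi$; these exist by assumption. I then claim that $\widetilde\varphi\circ\widetilde\psi$ is the desired extension of $\varphi\circ\psi$.

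There are only two points to verify. The composite $\widetilde\varphi\circ\widetilde\psi$ is a diffeomorphism of $X$, being a composition of diffeomorphisms, with inverse $\widetilde\psi^{-1}\circ\widetilde\varphi^{-1}$. Second, I must check that its restriction to the boundary equals $\varphi\circ\psi$. The one step that genuinely needs justification is that a self-diffeomorphism of a manifold with boundary carries $\partial X$ onto $\partial X$; this is the standard consequence of invariance of domain, since a boundary point and an interior point have non-homeomorphic neighborhoods and so cannot be interchanged by a homeomorphism. Granting this, $\widetilde\psi(\partial X)=\partial X$, so for $x\in\partial X$ we have $\widetilde\psi(x)\in\partial X$, whence $(\widetilde\varphi\circ\widetilde\psi)(x)=\widetilde\varphi(\psi(x))=\varphi(\psi(x))$. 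Therefore $(\widetilde\varphi\circ\widetilde\psi)|_{\partial X}=\varphi\circ\psi$, and $\widetilde\varphi\circ\widetilde\psi$ extends $\varphi\circ\psi$ to $X$ as a diffeomorphism.

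I expect no real obstacle here: the only conceptual ingredient beyond the algebra of composition is the boundary-preservation of diffeomorphisms, which is precisely what makes restricting the composite to $\partial X$ compatible with composing the boundary restrictions. There is no calculation to carry out. The genuine purpose of the lemma is the well-definedness remark it underlies, namely that the order of a cork in Definition~\ref{orderndefinition} is independent of choices: the set of boundary diffeomorphisms that extend over $X$ is closed under composition (and, by the same argument applied to inverses, forms a subgroup), so the minimal $n$ with $t^n$ extending is intrinsic.
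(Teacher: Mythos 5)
Your proof is correct, but it takes a genuinely different and more elementary route than the paper. You fix extensions $\widetilde\varphi,\widetilde\psi\colon X\to X$ and observe that $\widetilde\varphi\circ\widetilde\psi$ is a diffeomorphism restricting on $\partial X$ to $\varphi\circ\psi$; the only point needing care is boundary preservation, and in fact even your appeal to invariance of domain is superfluous, since the hypothesis $\widetilde\psi|_{\partial X}=\psi$ together with $\psi\colon\partial X\to\partial X$ already forces $\widetilde\psi(\partial X)\subseteq\partial X$. The paper argues differently: it attaches a cylinder $\partial X\times I$ to $X$ along $\varphi\colon\partial X\times\{1\}\to\partial X$, notes that extendability of $\varphi\circ\psi$ over $X$ is equivalent to extendability of $\psi$ over $X\cup_\varphi(\partial X\times I)$, and then uses the extension of $\varphi$ to identify $X\cup_\varphi(\partial X\times I)$ with $X$ fixing the boundary pointwise, reducing to the hypothesis on $\psi$. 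The collar formulation is natural in this paper because it matches the cut-and-paste viewpoint $X(Y,\phi)$ in which cork twists are defined (re-gluing along a collar is how one visualizes composing boundary twists), but for the lemma as stated your direct composition is shorter and avoids the gluing bookkeeping entirely; it also immediately yields, as you note, that the boundary diffeomorphisms extending over $X$ form a subgroup (apply the same argument to $\widetilde\varphi^{-1}$, whose boundary restriction is $\varphi^{-1}$), which is precisely the well-definedness of the cork order that the paper extracts from the lemma.
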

\begin{proof}
We attach the cylinder $\partial X\times I$ to $X$ by using $\varphi:\partial X\times\{1\}\to \partial X$.
The extendability problem of $(X,\varphi\circ \psi)$ is equivalent to that for a pair $(X\cup_\varphi (\partial X\times I),\psi)$, where $\psi$ induces a map $\psi:\partial X\times \{1\}\to \partial X\times \{1\}$.
Since $\varphi: \partial X\to \partial X$ extends to inside, by an identification $X\to \varphi(X)$,
$X\cup_\varphi (\partial X\times I)$ is diffeomorphic to $X$ with the boundary point-wise fixed.
The problem is reduced to the extendability problem of $\psi:\partial X\to \partial X$.
From the assumption, therefore, $\psi\circ \varphi$ extends to inside.
\qed\end{proof}
From this lemma if a self-diffeomorphism $\varphi$ on the boundary of a manifold extends to inside as a diffeomorphism, then so does $\varphi^n$.
\subsection{Constructions of $X_{n,m}({\bf x})$.}
\label{constructC}
Let $m$ be a positive integer and $n$ an integer with $n>0$.
Let $L_{n,m}$ denote $2n$ components link (located like a wheel) as in the diagram in {\sc Figure}~\ref{beisotopy}.
We denote the components by 
$$L_{n,m}=\{\alpha_0,\alpha_1,\cdots,\alpha_{n-1},\beta_0,\beta_1,\cdots,\beta_{n-1}\},$$
where the number $-m$ in any box stands for a $-m$ full twists.
The $n$-components $\{\alpha_i|i=0,1,\cdots, n-1\}$ (here called them radial components) of them lie in the radial direction about the center of the rotation.
The rest $n$-components $\{\beta_i|i=0,1,\cdots,n-1\}$ in $L_{n,m}$ (here we call them circular components) are put in the form that makes a circuit from a radial component to the same component.
The linking number between $\alpha_i$ and $\beta_i$ is one and other linking numbers between $\alpha_i$ and $\beta_j$ ($i\neq j$) are all zero.

Let ${\bf x}=(x_0,x_1,\cdots,x_{n-1})$ be a $\{\ast,0\}$-sequence with $x_i=\ast$ or $0$.
If $x_i=\ast$, then we describe a dot on the circle $\alpha_i$ and $0$ near the circle $\beta_i$.
If $x_i=0$, then we describe $0$ near the circle $\alpha_i$ and a dot on the circle $\beta_i$.
According to this process, we get a framed link diagram.
$X_{n,m}({\bf x})$ is defined to be a 4-manifold obtained by this framed link diagram.

From the construction, we have $H_\ast(X_{n,m}({\bf x}),\mathbb{Z})\cong H_\ast(B^4,\mathbb{Z})$, where $B^4$ is the 4-ball.
We note that $X_{1,m}(\ast)=X_{1,m}(0)=C(m)$ holds.

We can also construct $X_{n,m}({\bf x})$ in terms of branched cover of $C(m)$.
The $n$-fold branched cover of $C(m)$ along the slice disk of $K_{n,m}$ as in {\sc Figure}~\ref{slice} is $X_{n,m}(0,\cdots,0)$.
Doing several times cork twists $(C(m),\tau(m))$ for $\{\alpha_i,\beta_i\}$ with $x_i=\ast$ in the sequence ${\bf x}$, we get $X_{n,m}({\bf x})$.
\begin{figure}[htbp]
\begin{center}
\includegraphics{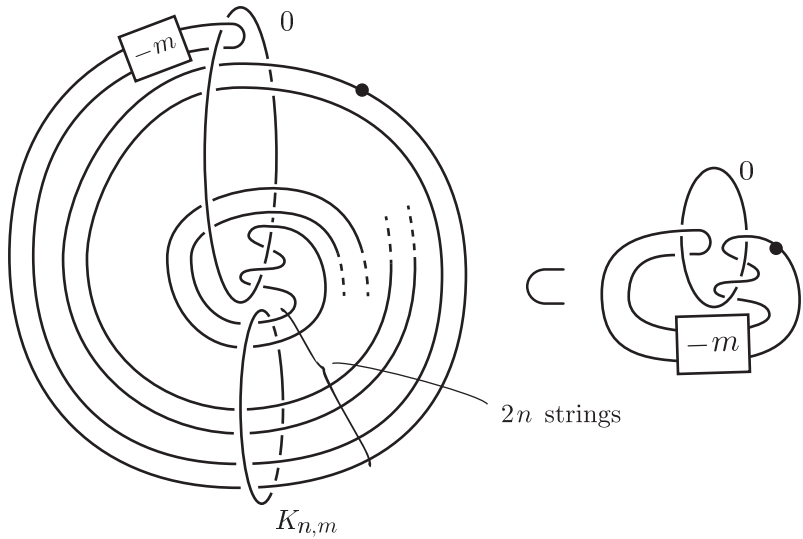}
\caption{A slice knot $K_{n,m}$ on $\partial C(m)$.}
\label{slice}
\end{center}
\end{figure}
\begin{lem}
\label{allcontra}
Let ${\bf x}$ be any $\{\ast,0\}$-sequence.
$X_{n,m}({\bf x})$ is a contractible 4-manifold.
\end{lem}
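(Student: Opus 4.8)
The plan is to prove contractibility by combining the homology computation already recorded (namely $H_\ast(X_{n,m}(\mathbf{x});\mathbb{Z})\cong H_\ast(B^4;\mathbb{Z})$) with a direct computation of the fundamental group from the handle diagram, and then invoking the Hurewicz and Whitehead theorems. First I would read off the handle decomposition determined by the sequence $\mathbf{x}$: a single $0$-handle, together with $n$ one-handles coming from the $n$ dotted components and $n$ two-handles coming from the $n$ zero-framed components. Regardless of $\mathbf{x}$, at each index $i$ exactly one of $\alpha_i,\beta_i$ is dotted and the other is $0$-framed, so each ``spoke'' $\{\alpha_i,\beta_i\}$ contributes precisely one $1$-handle and one $2$-handle.

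Next I would write down the standard presentation of $\pi_1$ obtained from such a diagram: generators $g_0,\dots,g_{n-1}$, one for each dotted circle, and relations $r_0,\dots,r_{n-1}$, one for each zero-framed attaching circle, where $r_i$ is the word spelled by the $i$-th $2$-handle as it runs over the $1$-handles. The crucial point --- and the step I expect to be the main obstacle --- is to show that $r_i$ is a word in the single generator $g_i$ with total exponent sum $\pm 1$. The linking data give this only on the level of abelianization (the $i$-th attaching circle links its partner once and every other dotted circle zero times), but for $\pi_1$ I need the stronger geometric statement that, after isotopy, the $i$-th attaching circle runs over only the $1$-handle of the same spoke and spells no other generator. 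This is where the explicit wheel picture of Figure~\ref{beisotopy} must be used: distinct spokes occupy disjoint angular sectors, the circular components can be slid \emph{over} (rather than through) the radial components of other sectors so that no net or spurious passages remain, and the $-m$ full-twist boxes introduce only self-crossings of a single component, which contribute no letters from other generators. Granting this, each spoke reduces to the $n=1$ situation $X_{1,m}(\ast)=X_{1,m}(0)=C(m)$.

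Since a word in the single generator $g_i$ is just the power $g_i^{k}$ with $k$ equal to its exponent sum, the relation $r_i$ becomes $g_i^{\pm 1}=1$, i.e.\ $g_i=1$. As this holds independently for every $i$, the presentation collapses to $\langle g_0,\dots,g_{n-1}\mid g_0,\dots,g_{n-1}\rangle$, so $\pi_1(X_{n,m}(\mathbf{x}))=1$. Combining this with the given homology computation, $X_{n,m}(\mathbf{x})$ is a compact simply-connected $4$-manifold with the integral homology of a point; hence by the Hurewicz theorem all of its homotopy groups vanish, and by Whitehead's theorem it is contractible.

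As a sanity check and possible alternative I would note the branched-cover description: $X_{n,m}(0,\dots,0)$ is the $n$-fold cyclic branched cover of the contractible $C(m)$ along the slice disk of $K_{n,m}$, and the remaining sequences are reached by flipping one coordinate at a time through the local symmetry $X_{1,m}(\ast)=X_{1,m}(0)$. This gives an inductive route, but verifying simple-connectivity at the branch locus requires essentially the same spoke-by-spoke bookkeeping, so I would keep the direct handle computation as the main argument and relegate the branched-cover viewpoint to a remark.
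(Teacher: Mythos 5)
Your overall frame---integral homology of a ball, plus $\pi_1=1$, then Hurewicz and Whitehead---is the right one and is where the paper ends up, but your main step has a genuine gap, and you have located it yourself: the claim that, after isotopy, the $i$-th relator $r_i$ is a word in the single generator $g_i$. You ``grant'' this with the heuristic that the circular components ``can be slid over (rather than through) the radial components of other sectors,'' but nothing in the construction supports that. Each circular component $\beta_i$ makes a full circuit of the wheel through every other sector (this is exactly why the nontrivial isotopy of {\sc Figure}~\ref{isotopy2}, nesting the $\beta_i$, is needed at all), and in the mixed diagrams the attaching circles do pass geometrically through dotted circles of other spokes, with algebraically cancelling signs. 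So a priori $r_i$ is a word containing other generators with exponent sum zero, and reducing it to $g_i^{\pm1}$ would require explicit slides and isotopies that you have not produced---and whose unavailability is, in general, the whole subtlety of cork diagrams. As you yourself note, the linking data only controls the abelianization, so your argument as written proves nothing beyond the homology statement already recorded in the construction.

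The paper sidesteps this computation entirely with a soft trick that you mention at the end but dismiss: flipping one coordinate $x_i$ between $\ast$ and $0$ is realized by the cork twist $\tau(m)$ on the copy of $C(m)$ formed by the $0$-handle together with $\{\alpha_i,\beta_i\}$, and regluing a \emph{simply-connected} codimension-zero piece along its boundary cannot change $\pi_1$: by van Kampen, both before and after the twist the fundamental group is $\pi_1(X_{n,m}-C(m))$ modulo the normal closure of the image of $\pi_1(\partial C(m))$, and precomposing with a boundary self-diffeomorphism does not change that image. Hence $\pi_1(X_{n,m}({\bf x}))\cong\pi_1(X_{n,m}(0,\dots,0))$ for every sequence, and only the all-$0$ diagram needs direct inspection---and there the dotted circles really are in separated position, each $\alpha_i$ meeting only $\beta_i$, so the presentation is that of $\natural nC(m)$, which is trivial. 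Your closing remark that this route ``requires essentially the same spoke-by-spoke bookkeeping'' is therefore inaccurate: the reduction is precisely how one avoids the bookkeeping. To repair your write-up, replace the unproved separation claim with this reduction (your observation $X_{1,m}(\ast)=X_{1,m}(0)=C(m)$ is already the seed of it) and keep your Hurewicz--Whitehead conclusion as the final step.
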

\begin{proof}
We show $X_{n,m}({\bf x})$ is simply-connected.
For $0\le i\le n-1$ exchanging $x_i$ as $\ast\to 0$ corresponds to a cork twist of $C(m)=0\text{-handle}\cup\{\alpha_i,\beta_i\}$.
Thus, this exchange does not change the fundamental group, i.e., $\pi_1(X_{n,m}(\cdots,\ast,\cdots))\cong \pi_1(X_{n,m}(\cdots,0,\cdots))$.

We may show $\pi_1(X_{n,m}(0,\cdots,0))$ is trivial.
Dotted circles in the diagram of $X_{n,m}(0,\cdots,0)$ have a separated position with each dotted circles.
Since any $\alpha_i$ does not link with $\beta_j$ with $i\ne j$.
The presentation of the fundamental group is the same as $\pi_1(\natural nC(m))$.
Thus, this group is the trivial group.
This means $\pi_1(X_{n,m}({\bf x}))=e$ for any $\{\ast,0\}$-sequence.

Thus $X_{n,m}({\bf x})$ is a contractible 4-manifold.
\qed\end{proof}
\begin{defn}[$C_{n,m}$, $D_{n,m}$, and $F_{n,m}$]
\label{CDF}
We define $C_{n,m},D_{n,m}$ and $F_{n,m}$ to be
$$C_{n,m}=X_{n,m}(\ast,0,\cdots,0),$$
$$D_{n,m}=X_{n,m}(0,\ast,\cdots,\ast),$$
and
$$F_{n,m}=X_{2n,m}(0,\ast,0\ast \cdots,0,\ast).$$
See {\sc Figure}~\ref{beisotopy} and~\ref{handleD} for $C_{n,m}$ and $D_{n,m}$
and see {\sc Figure}~\ref{orderdiff} for $F_{2,m}$.
\end{defn}
We show the following proposition.
\begin{prop}
\label{property}
\begin{enumerate}
\item $C_{2,m}=D_{2,m}$ holds.
\item $C_{n,m}, D_{n,m}, E_{n,m}$ and $F_{n,m}$ are contractible 4-manifolds.
\end{enumerate}
\end{prop}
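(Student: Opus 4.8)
The plan is to handle part~(1) by exploiting the rotational symmetry of the defining diagram, and to dispose of part~(2) by reducing to Lemma~\ref{allcontra} in every case except $E_{n,m}$, which must be argued directly from its modified handle picture.

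For part~(1), I would note that the rigid rotation by angle $2\pi/n$ of the plane carrying the wheel-shaped link $L_{n,m}$ to itself extends to a rotation of the ambient $S^3=\partial B^4$ (and of $B^4$), and permutes the components cyclically by $\alpha_i\mapsto\alpha_{i+1}$, $\beta_i\mapsto\beta_{i+1}$ (indices mod $n$). Hence it carries the framed, dotted diagram determined by a sequence ${\bf x}=(x_0,\dots,x_{n-1})$ to the one determined by the cyclic shift $(x_{n-1},x_0,\dots,x_{n-2})$. For $n=2$ this shift interchanges $(\ast,0)$ and $(0,\ast)$, so the rotation takes the diagram of $C_{2,m}=X_{2,m}(\ast,0)$ onto that of $D_{2,m}=X_{2,m}(0,\ast)$; being realized by an ambient diffeomorphism, it identifies the two 4-manifolds and gives $C_{2,m}=D_{2,m}$.

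For part~(2), the manifolds $C_{n,m}=X_{n,m}(\ast,0,\dots,0)$, $D_{n,m}=X_{n,m}(0,\ast,\dots,\ast)$, and $F_{n,m}=X_{2n,m}(0,\ast,\dots,0,\ast)$ are each of the form $X_{\bullet,m}({\bf x})$ for an explicit $\{\ast,0\}$-sequence. Thus Lemma~\ref{allcontra} applies verbatim and shows that all three are contractible (with the parameter $2n$ in place of $n$ in the case of $F_{n,m}$).

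The only case not covered by Lemma~\ref{allcontra} is $E_{n,m}$, since it is defined by the modification of the $C_{n,m}$ diagram shown in Figure~\ref{henka} rather than as some $X_{n,m}({\bf x})$. For this I would argue directly on the handle decomposition: one checks from Figure~\ref{henka} that the modification leaves the number of $1$- and $2$-handles unchanged and does not alter the linking and framing data that enter the homology computation or the Wirtinger-type presentation of $\pi_1$. The same separation-of-dotted-circles argument used for $X_{n,m}(0,\dots,0)$ then yields $H_\ast(E_{n,m};\mathbb{Z})\cong H_\ast(B^4;\mathbb{Z})$ and $\pi_1(E_{n,m})=e$, whence $E_{n,m}$ is contractible. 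I expect this last step to be the main obstacle: one must verify that the modification is homologically and $\pi_1$-neutral, i.e. that it introduces no new generators or relations beyond those already present in $C_{n,m}$, so that contractibility survives the passage from $C_{n,m}$ to $E_{n,m}$.
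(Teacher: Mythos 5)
Your proposal is correct and takes essentially the same approach as the paper: part~(1) is the paper's observation that the two handle decompositions coincide, which your $\pi$-rotation argument merely spells out, and part~(2) invokes Lemma~\ref{allcontra} for $C_{n,m}$, $D_{n,m}$, $F_{n,m}$ and repeats its argument for $E_{n,m}$. The paper itself disposes of $E_{n,m}$ with the one-line assertion that the same argument as Lemma~\ref{allcontra} applies, so your more cautious verification there elaborates, rather than deviates from, its proof.
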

\begin{proof}
(1) The handle decompositions of $C_{2,m}$ and $D_{2,m}$ are the same and the diffeomorphisms $\tau_{2,m}^C$ and $\tau_{2,m}^D$ are the exchange of dots and 0s.

(2) Lemma~\ref{allcontra} says that $C_{n,m}$, $D_{n,m}$ and $F_{n,m}$ are contractible.
We can also show that $E_{n,m}$ is contractible in the same way as Lemma~\ref{allcontra}.
\end{proof}
\subsection{A diffeomorphism $\tau_{n,m}^X$.}
Moving each component of $L_{n,m}$ as
$$\alpha_i\to \alpha_{i-1}$$
and
$$\beta_i\to \beta_{i-1},$$
we get a diffeomorphism $\tau_{n,m}^X:\partial X_{n,m}({\bf x})\to \partial X_{n,m}({\bf x})$.
Here we consider the suffices as elements in ${\mathbb Z}/n{\mathbb Z}$.
The diffeomorphism $\tau_{n,m}^X$ when $X=C,D$ is a rotation by angle $2\pi/n$ as defined in Section~\ref{results}.
\subsection{An isotopy}
We give an isotopy of $L_{n,m}$ as described in {\sc Figure}~\ref{isotopy2}.
First, we move $\beta_0$ to the innermost position.
Next, we move $\beta_1$ to the second innermost position.
In the same way as above we move all $\beta_i$.
By using the isotopy, we can get a handle diagram presentation of $X_{n,m}({\bf x})$.
As a diagram after isotopy, see {\sc Figure}~\ref{defofCn}.

\begin{figure}[htbp]
\begin{center}
\includegraphics[width=.9\textwidth]{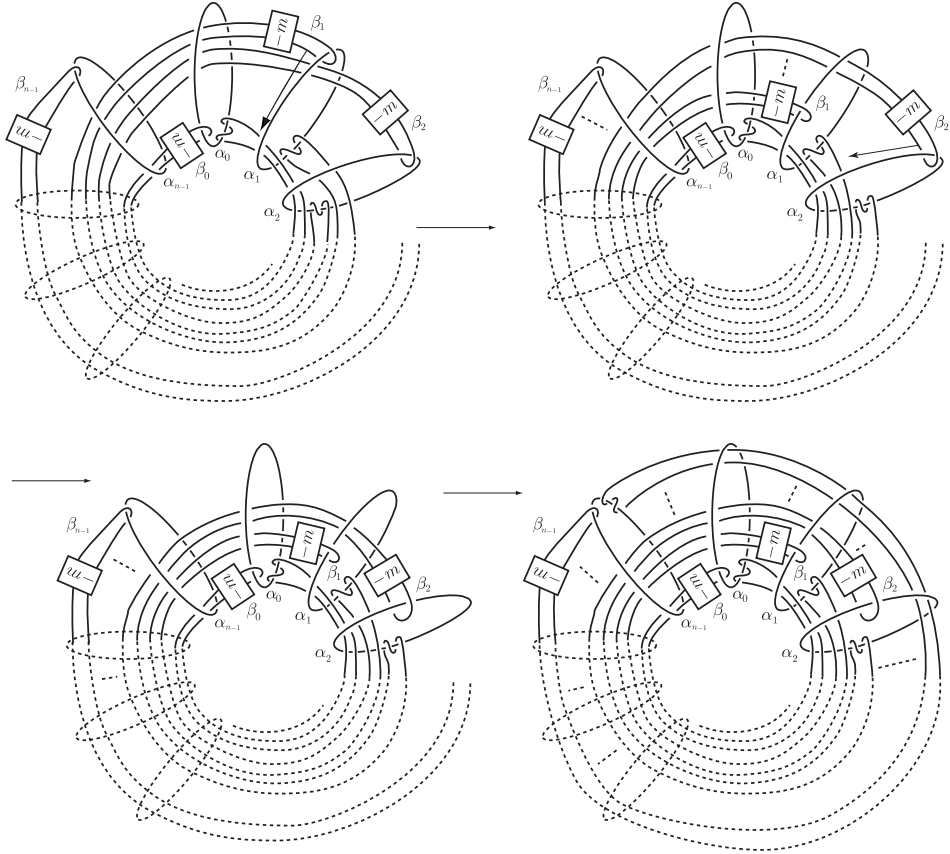}
\caption{An isotopy of a handle diagram of $C_{n,m}$.}
\label{isotopy2}
\end{center}
\end{figure}
%
%
\subsection{Another variation $E_{n,m}$.}
\label{OvDE}
\begin{defn}
We define $E_{n,m}$ to be the manifold obtained by the modification of $C_{n,m}$ as in {\sc Figure}~\ref{henka}.
\end{defn}
The handle diagram for $E_{3,m}$ after the same isotopy as above is {\sc Figure}~\ref{elliptic}.
In the same way as $C_{n,m}$ or $D_{n,m}$ each pair of a dotted 1-handle and 0-framed 2-handle with linking number 1 in $E_{n,m}$ consists of the cork $C(m)$ (see {\sc Figure}~\ref{elliptic2}).
\begin{figure}[htpb]
\begin{center}\includegraphics[width=.8\textwidth]{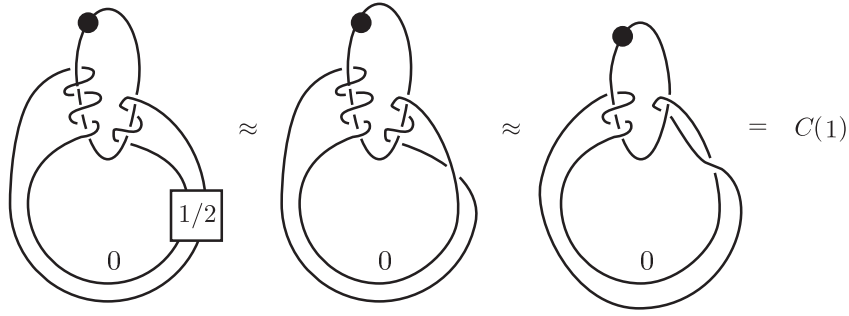}
\caption{The Akbulut cork $C(1)$ embedded in $E_{n,1}$.}
\label{elliptic2}
\end{center}
\end{figure}
\section{Proofs of main results}
\subsection{The cork-ness of $C_{n,m}$.}
Due to Gompf's result \cite{G} in order to see that $C(m)$ and $C_{n,m}$ admit Stein structure, we may deform the handle diagrams of $C(m)$ and $C_{n,m}$ into Legendrian links with some Thurston-Bennequin condition.
On the standard position of $\#nS^2\times S^1$ in \cite{G}, which is the boundary of the end sum $\natural nD^3\times S^1$, we put Legendrian links with all framings $tb-1$, where $tb$ is the Thurston-Bennequin number of each Legendrian knot.

Here we show the following.
\begin{prop}
\label{corkorder2}
$(C(m),\tau(m))$ is an order 2 Stein cork.
\end{prop}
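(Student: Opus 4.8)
The plan is to verify two independent facts: that $C(m)$ carries a Stein structure, and that the involution $\tau(m)$ does not extend over $C(m)$ as a diffeomorphism. Since $\tau(m)$ has order $2$ as a self-map of $\partial C(m)$, the relation $\tau(m)^2=\text{id}$ extends trivially, so condition (2) of Definition~\ref{orderndefinition} is immediate; once the non-extendability of $\tau(m)$ itself (condition (1)) is established, the order is exactly $2$ and $(C(m),\tau(m))$ is a Stein cork.

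For the Stein structure I would follow the Gompf criterion \cite{G} set up in the paragraph above. The handle diagram of $C(m)$ consists of a single dotted $1$-handle together with one $0$-framed $2$-handle linking it through the $-m$ full-twist box. I would isotope the attaching circle of the $2$-handle into Legendrian position in the standard tight contact structure on $S^2\times S^1$, use the $-m$ negative twists to arrange that its Thurston--Bennequin number satisfies $tb\ge 1$, and then stabilize down to $tb=1$, so that the smooth $0$-framing agrees with $tb-1$. Gompf's theorem then yields a Stein structure on $C(m)$. The negative twist box is exactly what makes the $tb-1$ condition attainable, which is why the hypothesis $m>0$ enters here.

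The substantive point is non-extendability. By Freedman's theorem \cite{F} the involution $\tau(m)$ always extends to a self-homeomorphism of $C(m)$, so no purely topological argument can detect the cork property and a smooth invariant is genuinely required. I would exhibit a smooth embedding of $C(m)$ into a suitable closed simply-connected $4$-manifold $X$ (of elliptic-surface type, in the spirit of the ambient manifolds $E(l)\#n\overline{{\mathbb C}P^2}$ used in Theorem~\ref{thmE}) for which the cork twist $X\mapsto X(C(m),\tau(m))$ preserves the homeomorphism type but changes the diffeomorphism type, the change being visible through the Seiberg--Witten basic classes. Since any extension of $\tau(m)$ over $C(m)$ would force $X=X(C(m),\tau(m))$, the disagreement of the gauge-theoretic invariants shows that no such extension exists, i.e.\ $(C(m),\tau(m))$ is a cork. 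For $m=1$ this reduces to the classical fact that the Akbulut cork \cite{A1} is a cork.

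The main obstacle is this last step: one must construct, \emph{uniformly in $m$}, an ambient $X$ in which the twist is provably effective, and then compute the relevant Seiberg--Witten invariants on both sides and see that they differ. Once the embedding is chosen so that the cork twist realizes a change of logarithmic-transform or knot-surgery data in $X$, the known invariants of the elliptic surface detect the difference; organizing this so that a single computation works for every $m>0$ is where the real effort lies. I note that the effective action on $HF^+(\partial C_{n,m})$ developed later (Theorem~\ref{action}) gives an alternative, Floer-theoretic confirmation of non-extendability for $n=2$, but I would avoid relying on it here to keep the argument non-circular.
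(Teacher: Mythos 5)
Your first two steps line up with the paper: the Stein structure is exactly what the paper's proof exhibits (its {\sc Figure}~\ref{Ackstein} puts the diagram of $C(m)$ in Legendrian position, realizing the twist box as a $(-m+1)$-full-twist in Legendrian form as in {\sc Figure}~\ref{tw}, so that the $0$-framed $2$-handle has framing $tb-1$), and the observation that $\tau(m)^2=\mathrm{id}$ disposes of condition (2) trivially. One caveat even here: be careful with the claim that the \emph{negative} twists let you ``arrange $tb\ge 1$'' --- negative twisting lowers the writhe and hence the Thurston--Bennequin number; what actually makes the diagram Stein-realizable is the Legendrian normal form for negative twist regions together with the strands running over the $1$-handle, not the twists pushing $tb$ up.

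The genuine gap is in your non-extendability step. You propose to embed $C(m)$ in a closed elliptic-type $4$-manifold and distinguish the twist by Seiberg--Witten basic classes, and you yourself concede that constructing such an ambient manifold and carrying out the computation \emph{uniformly in $m$} is ``where the real effort lies'' --- but that effort is precisely the content of the step, and it is not supplied; for $m>1$ nothing is actually proved (for $m=1$ you can cite \cite{A1}, but the proposition is for all $m>0$). Moreover, this heavy closed-manifold route is unnecessary: the paper's own mechanism (used explicitly in the proof of Theorem~\ref{main}, and applying verbatim to $C(m)$) is local and uniform in $m$. Attach a $-1$-framed $2$-handle $h$ along a meridian of the $0$-framed $2$-handle; since that meridian has a Legendrian representative with $tb=0$, the result $Z=C(m)\cup h$ is Stein. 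If $\tau(m)$ extended over $C(m)$, then $Z\cong Z(C(m),\tau(m))$; but after the twist the attaching circle of $h$ is a meridian of the $0$-framed $2$-handle on the other side, so the core of $h$ glued to the compressing (cocore) disk gives a smoothly embedded sphere of self-intersection $-1$ in $Z(C(m),\tau(m))$, which no Stein $4$-manifold contains by the adjunction inequality \cite{AM2}. This settles condition (1) with no Seiberg--Witten computation on a closed manifold, no choice of $E(l)$-type ambient space, and no $m$-dependence, which is exactly what your outline is missing.
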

\begin{proof}
The Stein structure on $C(m)$ is due to {\sc Figure}~\ref{Ackstein}.
Here, the box with number $-m+1$ stands for a ($-m+1$)-full twist with a Legendrian position as the second equation in {\sc Figure}~\ref{tw}.\qed
\end{proof}
\begin{figure}[htbp]
\begin{center}\includegraphics[width=.5\textwidth]{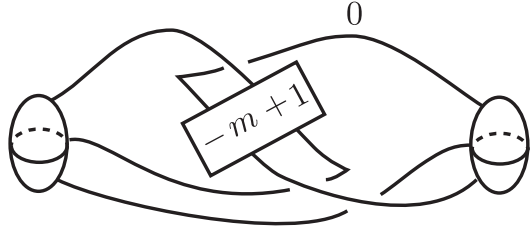}
\caption{Stein structure on $C(m)$.}
\label{Ackstein}
\end{center}
\end{figure}
\begin{figure}[htbp]
\begin{center}
\includegraphics{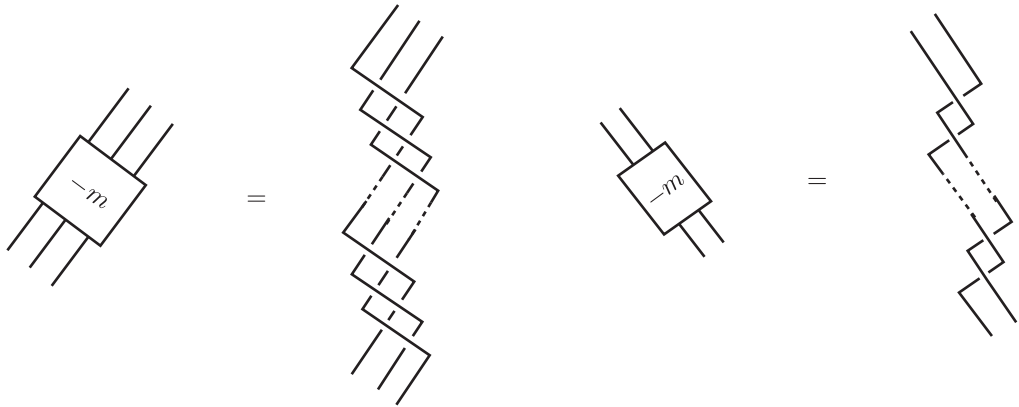}
\caption{Legendrian positions of strings with negative twists.}
\label{tw}
\end{center}
\end{figure}

We prove Theorem~\ref{main}.\\
{\bf Proof of Theorem~\ref{main}.}
We deform the handle diagram in {\sc Figure}~\ref{defofCn} as in {\sc Figure}~\ref{isotopytoStein}.
The last picture in {\sc Figure}~\ref{isotopytoStein} can be easily changed by isotopy into a Legendrian link in the standard position of a connected sum of several copies of $S^2\times S^1$ as in {\sc Figure}~\ref{gestein}.
For the case of $n=4$, the handle diagram is {\sc Figure}~\ref{stein}.

We show that for any integer $i$ with $1\le i\le n-1$, $(\tau_{n,m}^C)^i$ cannot extend to inside $C_{n,m}$.
One attaches a $-1$-framed 2-handle $h$ to the meridian of $\beta_{n-i}$ in $C_{n,m}$.
The resulting 4-manifold 
$$Z_{n-i}=C_{n,m}\cup_{\beta_{n-i}}h$$
is a Stein 4-manifold.
However the attaching sphere of the corresponding $\beta_{n-i}$ in $Z_{n-i}(C_{n,m},(\tau_{n,m}^C)^i)$ is a meridian of the $0$-framed 2-handle.
Thus we can construct an embedded sphere with self-intersection number $-1$
by taking the union of the core disk of $h$ and compressing disk of the meridian in $\partial C_{n,m}$.
On the other hand, in any Stein 4-manifold, there never exist any embedded $-1$-sphere, for example see \cite{AM2}.
Thus $Z_{n-i}(C_{n,m},(\tau_{n,m}^C)^i)$ never admit any Stein structure, hence $Z_{n-i}$ and $Z_{n-i}(C_{n,m},(\tau_{n,m}^C)^i)$ are exotic 4-manifolds.
This implies $(\tau_{n,m}^C)^i$ cannot extend to inside $C_{n,m}$.
Since $(\tau_{n,m}^C)^n=\text{id}$, clearly $(\tau_{n,m}^C)^n$ can extend to a diffeomorphism on $C_{n,m}$.
Therefore $(C_{n,m},\tau_{n,m}^C)$ is an order $n$ cork.
\qed
\begin{figure}[htbp]
\begin{center}\includegraphics[width=.9\textwidth]{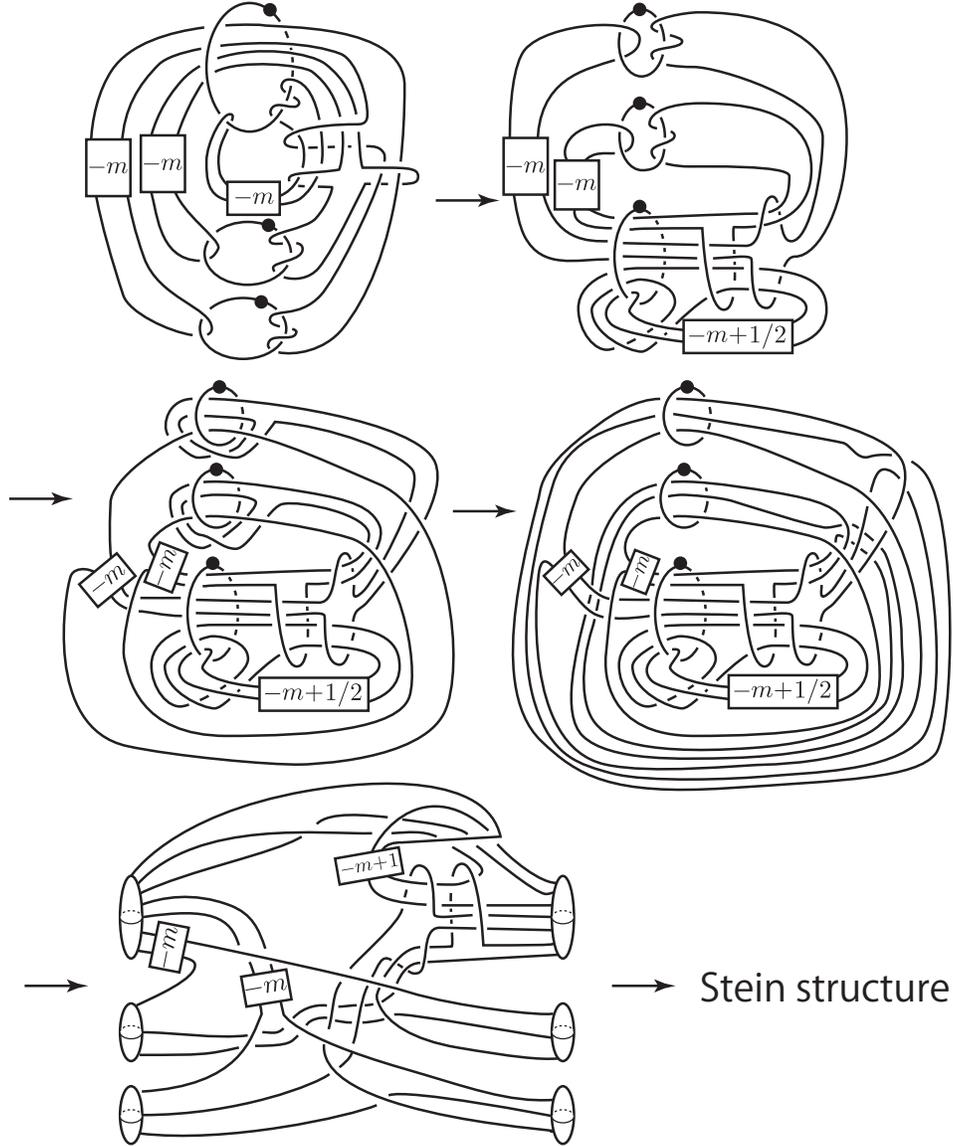}
\caption{The isotopy of $C_{3,1}$ to the Stein structure.}
\label{isotopytoStein}
\end{center}
\end{figure}
\begin{figure}[htbp]
\begin{center}\includegraphics[width=.6\textwidth]{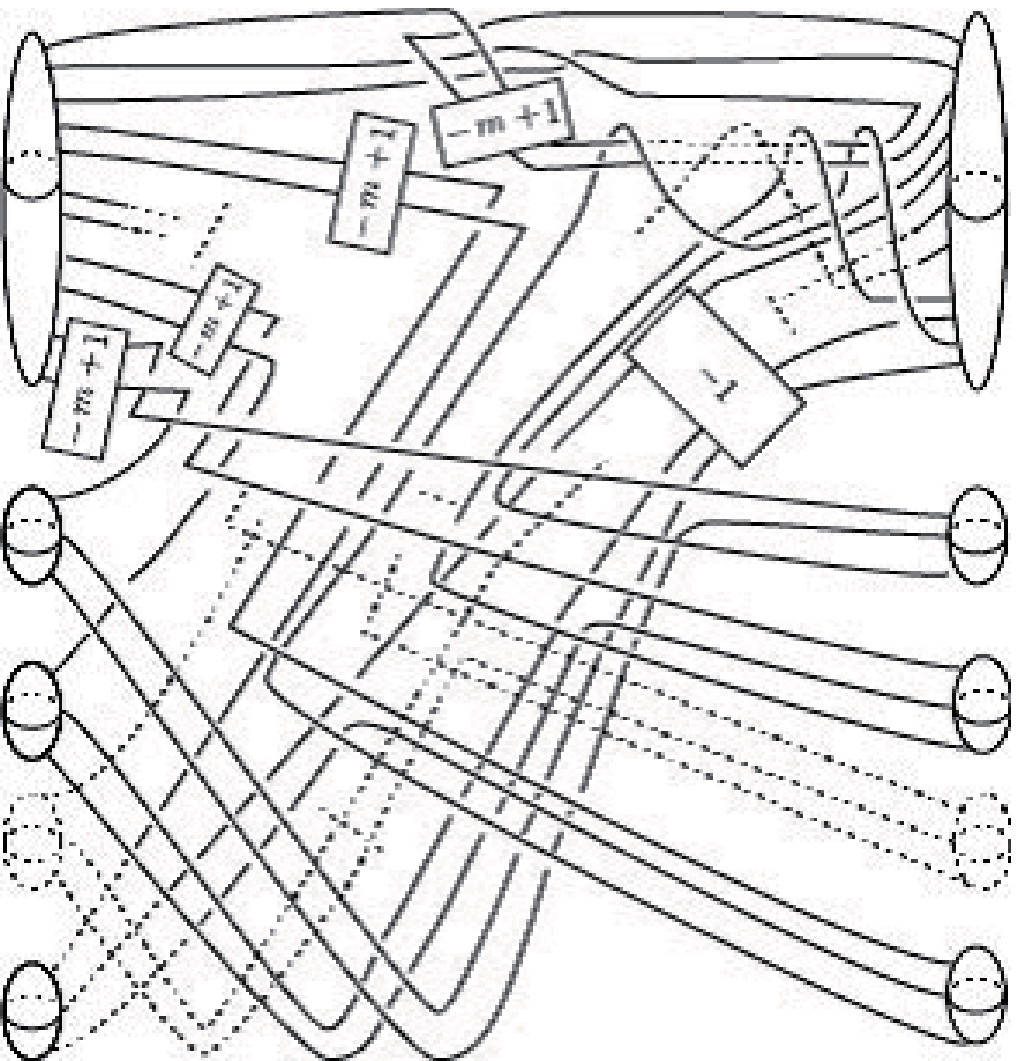}
\caption{A Stein structure of $C_{n,m}$.}
\label{gestein}
\end{center}
\end{figure}
\begin{figure}[htbp]
\begin{center}\includegraphics[width=.6\textwidth]{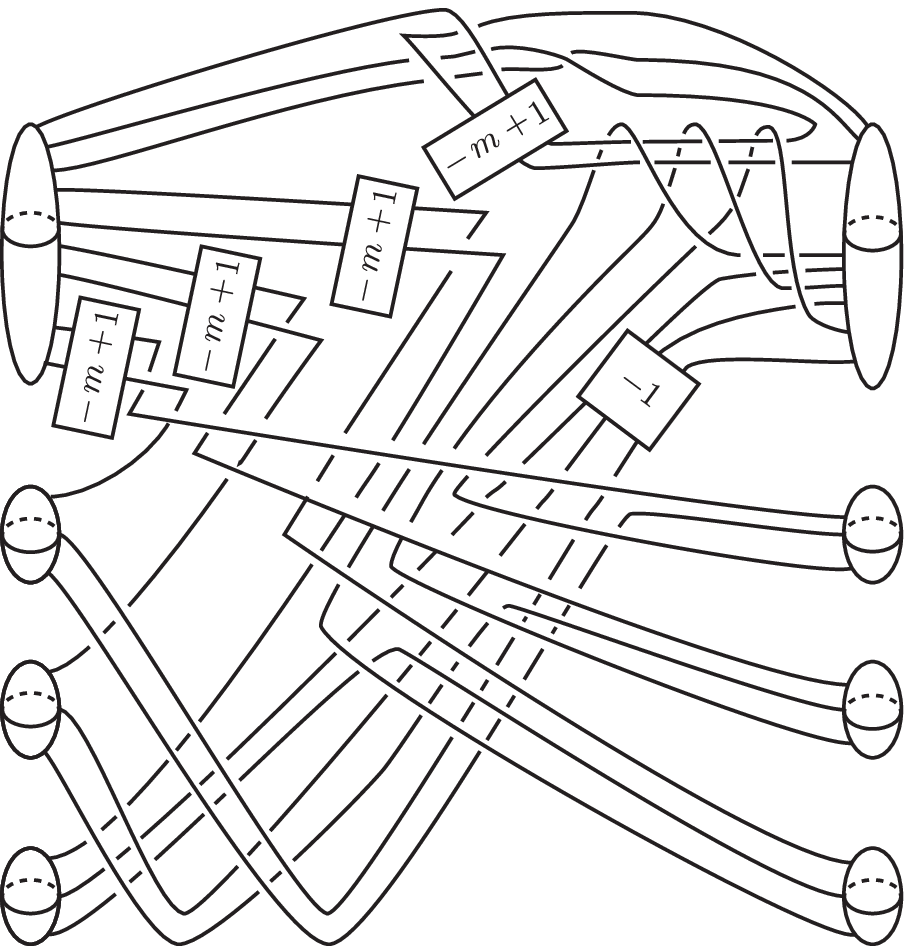}
\caption{A Stein structure of $C_{4,m}$.}
\label{stein}
\end{center}
\end{figure}

The following Lemma~\ref{pcork} and Corollary~\ref{allprecork} are key lemma and corollary for the results on cork in this article.
\begin{lem}
\label{pcork}
Let ${\bf x}$ be a $\{\ast,0\}$-sequence with ${\bf x}\neq (\ast,\ast,\cdots,\ast),(0,0,\cdots,0)$.
Then the twist $(X_{n,m}({\bf x}),\tau_{n,m}^X)$ is a cork twist.
\end{lem}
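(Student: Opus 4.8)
The plan is to adapt the embedded $-1$-sphere argument from the proof of Theorem~\ref{main} to an arbitrary non-constant sequence. By Lemma~\ref{allcontra} the manifold $X_{n,m}({\bf x})$ is contractible, so it suffices to show that $\tau:=\tau_{n,m}^X$ does not extend to a diffeomorphism $X_{n,m}({\bf x})\to X_{n,m}({\bf x})$; this makes $(X_{n,m}({\bf x}),\tau_{n,m}^X)$ a cork and the associated twist a cork twist. The first step is purely combinatorial. Since ${\bf x}\neq(\ast,\dots,\ast),(0,\dots,0)$ contains both symbols and the indices lie in ${\mathbb Z}/n{\mathbb Z}$, there is an index $k$ with $x_k=0$ and $x_{k+1}=\ast$ (reading cyclically, the number of $0\to\ast$ transitions equals the number of $\ast\to 0$ transitions, and both are positive). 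With the convention used in Theorem~\ref{main} that one application of $\tau$ replaces the decoration at position $p$ by the one at position $p+1$, the choice $x_k=0$ means that $\beta_k$ is a dotted $1$-handle in $X_{n,m}({\bf x})$, while $x_{k+1}=\ast$ guarantees that after a single twist the circle $\beta_k$ becomes a $0$-framed $2$-handle.

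Next I would attach a $-1$-framed $2$-handle $h$ to the meridian of $\beta_k$, forming $Z=X_{n,m}({\bf x})\cup h$, and run the same comparison as in Theorem~\ref{main}. On the one hand, in $Z(X_{n,m}({\bf x}),\tau)$ the component $\beta_k$ has turned into a $0$-framed $2$-handle, whose meridian bounds a compressing disk in $\partial X_{n,m}({\bf x})$; the union of this disk with the core of $h$ is an embedded sphere of self-intersection $-1$. On the other hand, if $\tau$ extended over $X_{n,m}({\bf x})$ then the twist would be trivial, giving $Z(X_{n,m}({\bf x}),\tau)\cong Z$, so $Z$ itself would have to contain an embedded $-1$-sphere. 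The argument is then closed by ruling this out: I would show that $Z$ carries a Stein structure, and a Stein $4$-manifold contains no embedded $-1$-sphere (\cite{AM2}).

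The hard part will be verifying this last Stein-ness for an \emph{arbitrary} non-constant ${\bf x}$, rather than only for the symmetric diagram $C_{n,m}$ treated in Theorem~\ref{main}. The key structural observation that should make it possible is that, because $h$ is attached along the meridian of the dotted circle $\beta_k$ with linking number one, the pair $(\beta_k,h)$ is a cancelling $1$-$2$ pair; cancelling it produces a handle diagram of $Z$ with one fewer $1$-handle, which I would isotope into the standard position on a connected sum of copies of $S^2\times S^1$ and realize as a Legendrian diagram with every framing equal to $tb-1$, exactly as in {\sc Figure}~\ref{gestein}. I want to emphasize that this does \emph{not} show $X_{n,m}({\bf x})$ is itself Stein: after the cancellation the Legendrian diagram of $Z$ no longer exhibits $X_{n,m}({\bf x})$ as a sublevel set, since the handle $h$ has been absorbed together with the $1$-handle $\beta_k$. (If one preferred to avoid the Stein realization for general ${\bf x}$ entirely, an alternative would be to embed $X_{n,m}({\bf x})$ into a minimal closed manifold and detect the $-1$-sphere through minimality, in the spirit of Theorem~\ref{thmE}.) Granting the Stein-ness of $Z$, the manifolds $Z$ and $Z(X_{n,m}({\bf x}),\tau)$ are distinguished by the presence of a $-1$-sphere, so $\tau$ cannot extend over $X_{n,m}({\bf x})$ and $(X_{n,m}({\bf x}),\tau_{n,m}^X)$ is a cork.
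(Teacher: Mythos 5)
Your combinatorial step (finding $k$ with $x_k=0$, $x_{k+1}=\ast$) and the comparison of $Z=X_{n,m}({\bf x})\cup h$ with $Z(X_{n,m}({\bf x}),\tau)$ via the embedded $-1$-sphere correctly mirror the proof of Theorem~\ref{main}. But the crux of your argument --- that $Z$ carries a Stein structure for an \emph{arbitrary} non-constant ${\bf x}$ --- is asserted rather than proved, and this is a genuine gap. In Theorem~\ref{main} the Stein-ness of $Z_{n-i}$ rests on an explicit isotopy of the particular diagram of $C_{n,m}=X_{n,m}(\ast,0,\cdots,0)$ into Gompf's standard form ({\sc Figures}~\ref{isotopytoStein} and~\ref{gestein}); nothing analogous is available for general ${\bf x}$, and indeed the paper's final section poses Stein-ness of $X_{n,m}({\bf x})$ for general ${\bf x}$ as an \emph{open problem}. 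Your cancellation of the pair $(\beta_k,h)$ does not dispose of the difficulty: before cancelling you must slide every strand that runs geometrically through the 1-handle $\beta_k$ (not only $\alpha_k$ --- circular components pass through it with algebraic linking number zero) over the $-1$-framed handle $h$, which introduces new twists and shifts the framings of those components; you would then still have to realize the resulting diagram as a Legendrian link with all framings $tb-1$, and ``exactly as in {\sc Figure}~\ref{gestein}'' does not apply, since that figure is specific to the sequence $(\ast,0,\cdots,0)$. Your parenthetical fallback --- embedding into a minimal closed manifold as for $E_{n,m}$ --- likewise requires constructing a concrete closed embedding and computing the twist, which is the entire content of Theorem~\ref{thmE} and was carried out only for $E_{n,m}$.

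The paper proves Lemma~\ref{pcork} by a different, Stein-free route, which is exactly designed to avoid the step you are stuck on. After a cyclic shift one may assume ${\bf x}=(\ast,0,x_2,\cdots,x_{n-1})$. Attaching 2-handles along meridians of the $\alpha_i$ for $i\ge 2$ (together with 3-handles) cancels the pairs $\{\alpha_i,\beta_i\}$ one at a time --- with two slightly different cancellation moves depending on whether $x_i=\ast$ or $x_i=0$ ({\sc Figure}~\ref{0move}) --- yielding embeddings $X_{n,m}({\bf x})\subset X_{2,m}(\ast,0)\subset X_{1,m}(\ast)=C(m)$. Under this embedding the twist by $\tau_{n,m}^X$ carries $C(m)$ to $C(m)$ with induced boundary map $\tau(m)$, so an interior extension of $\tau_{n,m}^X$ would force $\tau(m)$ to extend over $C(m)$, contradicting Proposition~\ref{corkorder2}. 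This is the point advertised in the introduction: cork-ness is detected \emph{without the direct aid of a Stein structure} on the contractible manifold itself; only the single known cork $(C(m),\tau(m))$ is needed. To salvage your write-up you would either have to carry out the Legendrian realization of $Z$ honestly for every non-constant ${\bf x}$ (in effect solving a variant of the paper's open problem), or switch to the embedding-and-reduction argument.
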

\begin{proof}
By permuting ${\bf x}$ as $(x_0,x_1,\cdots,x_{n-1})\mapsto (x_1,\cdots,x_{n-1},x_0)$ in several times, we may assume that the sequence is ${\bf x}=(\ast,0,x_2,\cdots,x_{n-1})$.

For $2\le i\le n-1$ if $x_i=\ast$, then attaching a 0-framed 2-handle to the meridian of $\alpha_i$ in $X_{n,m}({\bf x})$ and canceling with $\alpha_i$, we can make a separated 0-framed 2-handle in the diagram of $X_{n,m}({\bf x})\cup\text{2-handle}$.
Canceling the 0-framed 2-handle with a 3-handle, we get $X_{n,m}({\bf x})\cup\text{2-handle}\cup\text{3-handle}=X_{n-1,m}({\bf x}')$, where ${\bf x}'$ is $(\ast,0,x_2,\cdots,\hat{x}_i,\cdots,x_{n-1})$.
The hat means deleting of the component.
This handle attachment means $X_{n,m}({\bf x})\subset X_{n-1,m}({\bf x}')$.
The cobordism between $\partial X_{n,m}({\bf x})$ and $\partial  X_{n-1,m}({\bf x}')$ is a homology cobordism.
Iterating this process, we have $X_{n,m}({\bf x})\subset X_{n',m}(\ast,0,0,\cdots,0)$.

For $2\le i\le n'-1$ if $x_i=0$, then attaching a 2-handle to the meridian of $\alpha_i$ in $X_{n',m}(\ast,0,\cdots,0)$, we can move $\alpha_i$ to the position in the first picture in {\sc Figure}~\ref{0move}.
By doing the handle slide along the arrow in the picture, we get the second picture.
By sliding some components to the meridional 0-framed 2-handle, the components $\{\alpha_i,\beta_i\}$ with the 0-framed 2-handle is separated as in {\sc Figure}~\ref{0move}.
Canceling the 0-framed 2-handle with a 3-handle, we get $X_{n'-1,m}(\ast,0,\cdots,0)$.
Iterating such a canceling, we get $X_{2,m}(\ast,0)$.
Thus, we get an embedding
$$X_{n,m}({\bf x})\subset X_{2,m}(\ast,0)\subset X_{1,m}(\ast)=C(m).$$

Let $\tau_{n,m}^X$ be the $2\pi/n$ rotation as in {\sc Figure}~\ref{beisotopy}.
The twist $C(m)(X_{n,m}({\bf x}),\tau_{n,m}^X)$ is diffeomorphic to $C(m)$, and by the twist, the diffeomorphism on the boundary is mapped in the same way as $\tau(m)$.
If $\tau_{n,m}^X$ extends to a self-diffeomorphism on inside $X_{n,m}$, then $\tau(m)$ can extend to inside $C(m)$.
This is a contradiction to Proposition~\ref{corkorder2}.
Therefore, $(X_{n,m}({\bf x}),\tau_{n,m}^X)$ is a cork.
\begin{figure}[htbp]
\begin{center}
\includegraphics[width=.7\textwidth]{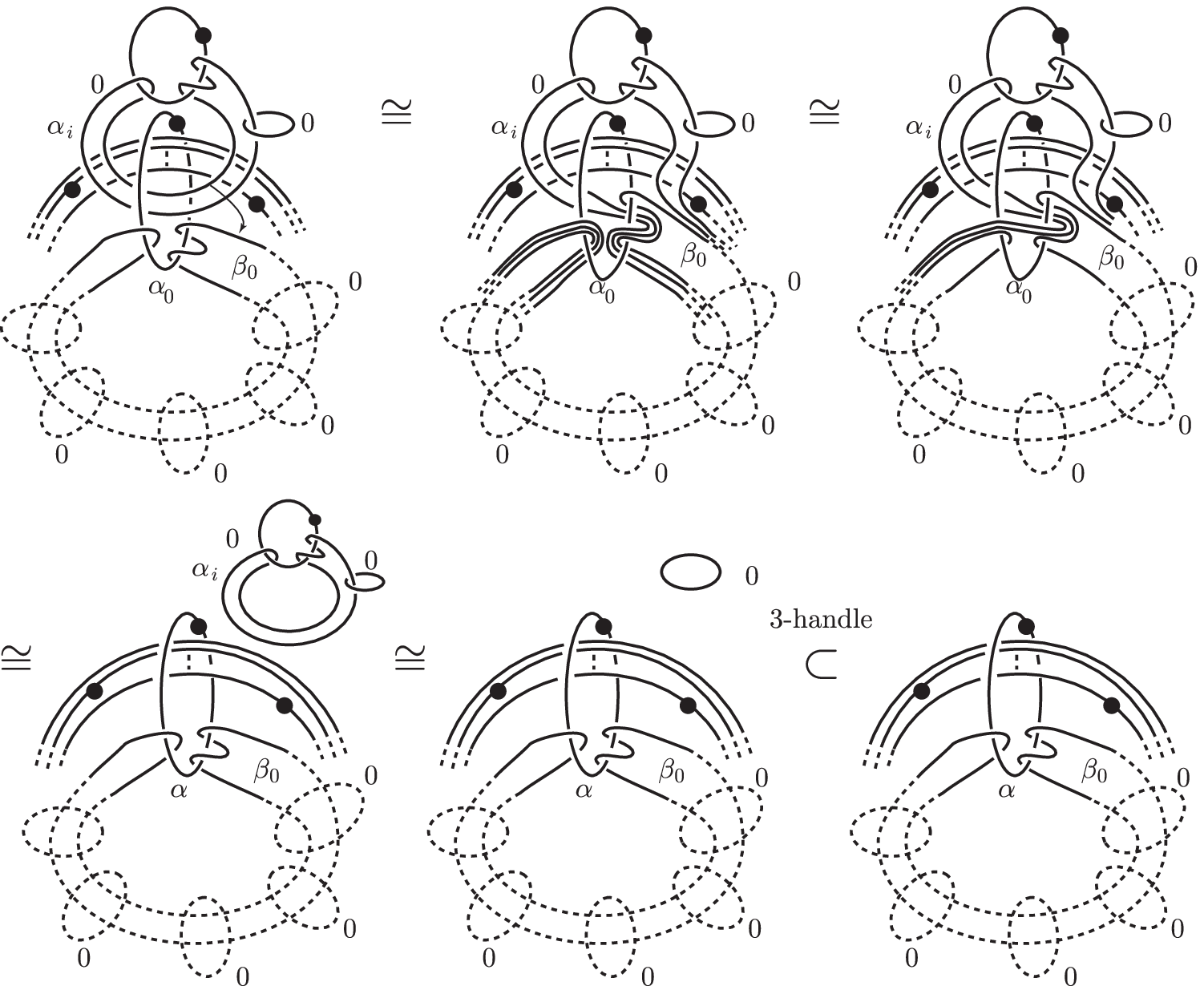}
\caption{$X_{n,m}(\ast,0,\cdots,0)$ with 0-framed 2-handle.}
\label{0move}
\end{center}
\end{figure}
\qed
\end{proof}
Note that these cobordisms give homology cobordisms.
\begin{defn}[Shifting]
Let ${\bf x}$ be a $\{\ast,0\}$-sequence.
If $S_i$ is a cyclic map acting on $\{\ast,0\}$-sequences as ${\bf x}=(x_0,x_1,\cdots,x_{n-1})\mapsto (x_{-i},x_{-i+1},\cdots,x_{-i-1})$, we call $S_i$ a shifting map on the sequence.
Here we consider the suffices as ${\mathbb Z}/n\mathbb{Z}$.
\end{defn}
\begin{defn}[Period]
Let ${\bf x}=(x_0,x_1,\cdots,x_{n-1})$ be a $\{\ast,0\}$-sequence.
We call
$$\min\{p|S_p({\bf x})={\bf x},p>0\}$$
the period of ${\bf x}$.
\end{defn}
From the definition, the period is a divisor of $n$.
\begin{cor}
\label{allprecork}
Let ${\bf x}$ be a $\{\ast,0\}$-sequence with period $N>1$.
Then $(X_{n,m}({\bf x}),\tau_{n,m}^X)$ is an order $N$ cork.
\end{cor}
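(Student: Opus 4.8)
Corollary~\ref{allprecork} states that if ${\bf x}$ is a $\{\ast,0\}$-sequence with period $N>1$, then $(X_{n,m}({\bf x}),\tau_{n,m}^X)$ is an order $N$ cork. The plan is to reduce everything to the two facts already available: Lemma~\ref{pcork}, which shows that a single twist $\tau_{n,m}^X$ is a cork twist whenever ${\bf x}$ is neither all-$\ast$ nor all-$0$; and the observation that $\tau_{n,m}^X$ acts on the underlying link $L_{n,m}$ by the rotation $\alpha_i\mapsto\alpha_{i-1}$, $\beta_i\mapsto\beta_{i-1}$. The key point to exploit is that applying the twist $i$ times is the same data as shifting the sequence: the twisted manifold $X_{n,m}({\bf x})(X_{n,m}({\bf x}),(\tau_{n,m}^X)^i)$ is diffeomorphic to $X_{n,m}(S_i({\bf x}))$, with the boundary identification given by $(\tau_{n,m}^X)^i$.

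**Condition (2): $N$ is an upper bound.** First I would verify that $(\tau_{n,m}^X)^N$ extends to a diffeomorphism of $X_{n,m}({\bf x})$. Since ${\bf x}$ has period $N$, we have $S_N({\bf x})={\bf x}$, so the rotation by $2\pi N/n$ carries the decorated diagram of $X_{n,m}({\bf x})$ to itself: the dots land on dots and the $0$-framings on $0$-framings. This rotation is therefore realized by an actual self-diffeomorphism of the whole handlebody $X_{n,m}({\bf x})$, not merely of its boundary, and its restriction to $\partial X_{n,m}({\bf x})$ is exactly $(\tau_{n,m}^X)^N$. Hence condition (2) of Definition~\ref{orderndefinition} holds with the cork order at most $N$.

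**Condition (1): no smaller $i$ extends.** The heart is to show that for $0<i<N$ the map $(\tau_{n,m}^X)^i$ does \emph{not} extend over $X_{n,m}({\bf x})$. Fix such an $i$. Because $i<N$ and $N$ is the period, we have $S_i({\bf x})\neq{\bf x}$, so some coordinate differs: there is an index $j$ with $x_j\neq x_{j-i}$, equivalently the sequence ${\bf x}$ has, after the relevant rotation, a place where a $\ast$ meets a $0$. The strategy is to apply Lemma~\ref{pcork} to detect this. Consider the composite twist data $(X_{n,m}({\bf x}),(\tau_{n,m}^X)^i)$: if this extended, then by the extendability bookkeeping of the opening Lemma and the identification of $i$-fold twisting with the shift $S_i$, the manifold $X_{n,m}({\bf x})$ and its twist $X_{n,m}(S_i({\bf x}))$ would be diffeomorphic \emph{rel the boundary identification}. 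I would argue that this forces an extension of the order-$2$ twist $\tau(m)$ on the embedded sub-cork $C(m)$ determined by a pair $\{\alpha_j,\beta_j\}$ on which the decoration actually changes, contradicting Proposition~\ref{corkorder2} exactly as in the proof of Lemma~\ref{pcork}. Concretely, the embedding $X_{n,m}({\bf x})\subset C(m)$ produced in Lemma~\ref{pcork} localizes the nontrivial change to a single copy of the Akbulut cork, and an extension of $(\tau_{n,m}^X)^i$ would descend to an extension of $\tau(m)$ there.

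**The main obstacle.** The delicate step is the last one: I must make precise that the cork twist $(\tau_{n,m}^X)^i$, even when $i$ is a proper divisor-multiple structure is involved, genuinely restricts to a \emph{nontrivial} cork twist of one embedded $C(m)$ factor rather than being absorbed by extendable symmetries of the other factors. The period hypothesis $N>1$ together with $0<i<N$ guarantees that at least one $\{\alpha_j,\beta_j\}$-pair sees an honest $\ast\leftrightarrow 0$ swap under $S_i$, and Lemma~\ref{pcork} must be invoked for the shifted sequence $S_i({\bf x})$ (which is again neither all-$\ast$ nor all-$0$, since ${\bf x}$ is not) to run the $C(m)$-reduction; checking that the reduction respects the $2\pi i/n$ rotation boundary map, so that the contradiction with Proposition~\ref{corkorder2} is clean, is where the real care is required. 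Once conditions (1) and (2) are both in hand, Definition~\ref{orderndefinition} gives that $(X_{n,m}({\bf x}),\tau_{n,m}^X)$ is an order $N$ cork, completing the proof.
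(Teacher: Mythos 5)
Your proposal is correct and follows essentially the same route as the paper: use the period hypothesis to find an index where $S_i({\bf x})$ and ${\bf x}$ disagree, cancel all other handle pairs as in Lemma~\ref{pcork} to embed $X_{n,m}({\bf x})$ in a single copy of $C(m)$ on which $(\tau_{n,m}^X)^i$ acts as the Akbulut twist $\tau(m)$, contradict Proposition~\ref{corkorder2}, and extend $(\tau_{n,m}^X)^N$ via the diagram symmetry $S_N({\bf x})={\bf x}$. The ``delicate step'' you flag is handled by the paper exactly as you propose, by citing the argument of Lemma~\ref{pcork} for the pair $\{\alpha_{j-i},\beta_{j-i}\}$ where the dot and $0$-framing honestly swap.
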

\begin{proof}
For any integer $0<i<N$, there exists $j$ such that $S_{i}(x_j)\neq x_{j-i}$.
If there does not exist such $j$, then the period of ${\bf x}$ is less than or equal to $i$.
This is contradiction.
Canceling all handles but $\alpha_{j-i}$ and $\beta_{j-i}$ as in the proof in Lemma~\ref{pcork}, we have an embedding:
$$X_{n,m}({\bf x})\subset X_{1,m}(x_{j-i})=C(1).$$
By the shifting map $S_{i}$ the self-diffeomorphism $(\tau_{n,m}^X)^i$ exchanges the dotted 1-handle and 0-framed 2-handle on $\{\alpha_{j-i},\beta_{j-i}\}$, where $\tau_{n,m}^X$ is the $2\pi/n$ rotation, since $S_{i}(x_j)\neq x_{j-i}$.

This twist $C(1)(X_{n,m},(\tau_{n,m}^X)^i)$ for the embedding gives the effect $(C(1),\tau(m))$.
By the same argument as the proof of Lemma~\ref{pcork}, $(X_{n,m}({\bf x}),(\tau_{n,m}^X)^i)$ cannot extend to inside $X_{n,m}({\bf x})$ as a diffeomorphism.

Since shifting map $S_N$ does not change ${\bf x}$, the diffeomorphism $(\tau_{n,m}^X)^N$ can extend to $X_{n,m}({\bf x})$.
As a result, $(X_{n,m}({\bf x}),\tau_{n,m})$ is an order $N$ cork.
\qed
\end{proof}
\subsection{Cork twist for embeddings relationship in $\{C_{n,m}\}$.}
Let $k,n,m$ be positive integers.
There exist embeddings $C_{n,m}\subset C_{n+k,m}$, and $C_{n+k,m}\subset C_{n,m}$ according to Lemma~\ref{pcork}.
\begin{cor}[Cork twist of $C_{n,m}$]
\label{thmC}
Let $n$ be a positive integer.
The cork twist $C_{n,m}(C_{2,m},\tau_{2,m}^C)$ for the first embedding above gives a diffeomorphism $\varphi:C_{n,m}\to C_{n,m}(C_{2,m},\tau_{2,m}^C)$ and the boundary restriction $\varphi|_{\partial}:\partial C_{n,m}\to \partial C_{n,m}$ coincides with $\tau_{n,m}^C$.
\end{cor}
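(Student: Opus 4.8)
The plan is to realize the diffeomorphism $\varphi$ as the $2\pi/n$ rotation $\tau_{n,m}^C$ itself, exploiting the fact that $C_{n,m}=X_{n,m}(\ast,0,\dots,0)$ carries only a \emph{single} $\ast$ in its defining $\{\ast,0\}$-sequence. First I would pin down the first embedding $C_{2,m}\subset C_{n,m}$ supplied by Lemma~\ref{pcork} (with the lemma's $n,n+k$ specialized to $2,n$): here $C_{n,m}$ is built from the embedded $C_{2,m}$ by attaching the $1$- and $2$-handles of the remaining pairs $\{\alpha_i,\beta_i\}$ ($2\le i\le n-1$), all carrying the label $0$. Equivalently, the embedded $C_{2,m}$ occupies two adjacent positions of the wheel, which I take to be $\{0,n-1\}$, with the $\ast$ on position $0$. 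Since the complementary cobordism $C_{n,m}\smallsetminus C_{2,m}$ is a homology cobordism, $C_{2,m}$ may be isotoped into the interior, so the cork twist leaves the outer boundary fixed, $\partial C_{n,m}(C_{2,m},\tau_{2,m}^C)=\partial C_{n,m}$, and $\varphi|_\partial$ is a well-defined self-map of $\partial C_{n,m}$.

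Next I would identify the twist diagrammatically. By Proposition~\ref{property}(1) the order-$2$ twist $\tau_{2,m}^C$ acts on the diagram of $C_{2,m}$ precisely by exchanging the dot and the $0$, and because the embedding is standard this exchange is supported inside the embedded copy and leaves the remaining $0$-labelled pairs untouched. Hence the twisted manifold has handle diagram $X_{n,m}(0,\dots,0,\ast)$, with the single $\ast$ now moved to position $n-1$. The crucial observation is that, as $C_{n,m}$ has only one $\ast$, this relabelling is nothing but a unit shift of the entire sequence, i.e. $X_{n,m}(0,\dots,0,\ast)=X_{n,m}(S_{n-1}(\ast,0,\dots,0))$.

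Finally I would invoke the $n$-fold rotational symmetry of $L_{n,m}$. The rotation by $2\pi/n$ is an ambient diffeomorphism of the boundary of the $0$-handle that carries $L_{n,m}$ to itself while cyclically permuting the labelled components, and it respects all attaching and framing data; it therefore extends to a diffeomorphism $\varphi\colon C_{n,m}=X_{n,m}(\ast,0,\dots,0)\to X_{n,m}(0,\dots,0,\ast)=C_{n,m}(C_{2,m},\tau_{2,m}^C)$ whose boundary restriction is, by definition, $\tau_{n,m}^C$. Note that $\varphi$ is a diffeomorphism between two distinct (though abstractly diffeomorphic) manifolds rather than a self-diffeomorphism of $C_{n,m}$, so there is no conflict with the cork-ness of $(C_{n,m},\tau_{n,m}^C)$ established in Theorem~\ref{main}.

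The step I expect to be the main obstacle is the bookkeeping in the last two paragraphs: one must check that the boundary identification coming from the cork-twist construction and the boundary restriction of the rotation genuinely agree as maps $\partial C_{n,m}\to\partial C_{n,m}$. In particular the orientation of the rotation must match the direction in which the twist displaces the $\ast$ (this is exactly why the embedded $C_{2,m}$ is placed on the adjacent positions $\{0,n-1\}$, so that $\tau_{2,m}^C$ moves the $\ast$ in the sense of $\alpha_0\mapsto\alpha_{n-1}$ rather than the opposite one), and one must confirm that the twist is supported strictly away from the attached trivial $0$-pairs. Once the single-$\ast$ reduction of the relabelling to a global shift is in hand, the remainder is formal.
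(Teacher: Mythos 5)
Your proposal is correct and takes essentially the same route as the paper's proof: the paper likewise uses the inserting-map embedding $(\ast,0)\mapsto(\ast,0,\cdots,0)$ of Lemma~\ref{pcork} placing $C_{2,m}$ at positions $0$ and $n-1$, notes that the twist $\tau_{2,m}^C$ swaps these labels so the twisted manifold is $X_{n,m}(0,\cdots,0,\ast)$, and identifies this with $C_{n,m}$ via the $2\pi/n$ rotational symmetry of $L_{n,m}$, whose boundary restriction is by definition $\tau_{n,m}^C$ (your extra bookkeeping on orientation and support of the twist only makes explicit what the paper leaves implicit). The sole omission is that the paper separately disposes of the edge cases $n=2$ (identity embedding, trivial) and $n=1$ (via a deleting map of sequences), which your argument does not cover but which are immediate.
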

\begin{proof}
This assertion follows immediately from Lemma~\ref{pcork} and the argument below.

Suppose that $n>2$.
The inserting map of $\{\ast,0\}$-sequences
$$(\ast,0)=(x_0,x_1)\mapsto (x_0,y,\cdots,y,x_1),$$
where $y=0$ gives an embedding $C_{2,m}\hookrightarrow C_{n,m}$.
Thus, the cork twist $C_{n,m}(C_{2,m},\tau_{2,m})$ give the effect 
$$(x_0,y,\cdots,y,x_1)\mapsto (x_1,y,\cdots,y,x_0).$$
This deformation corresponds to the cork twist $(C_{n,m},\tau_{n,m}^C)$.

Suppose that $n=1$.
The deleting map of $\{\ast,0\}$-sequences
$$(\ast)\mapsto (0).$$
gives an embedding $C_{2,m}\hookrightarrow C_{1,m}$.
Thus, the cork twist $C_{1,m}(C_{2,m},\tau_{2,m})$ give the effect 
 $(\ast,0)\mapsto (\ast)$
This deformation corresponds to the cork twist $(C(m),\tau(m))$.

Suppose that $n=2$.
By taking the identity map $C_{2,m}\to C_{2,m}$, the statement is trivial.
\begin{figure}[htbp]
\begin{center}\includegraphics[width=.5\textwidth]{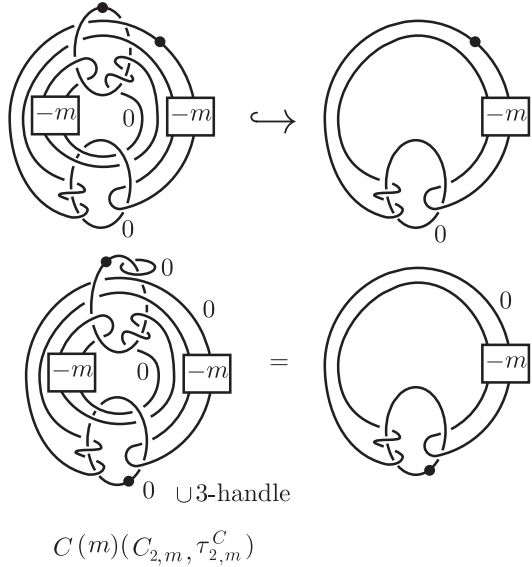}
\caption{An embedding $C_{2,m}\hookrightarrow C(m)$.}
\label{C2m}
\end{center}
\end{figure}
\qed
\end{proof}
\begin{rmk}
\label{Esame}
For $E_{n,m}$, one can find the similar embeddings to those of $C_{n,m}$.
Namely, there exist $E_{n,m}\subset E_{n+k,m}$ and $E_{n,m}\subset E_{n+k,m}$ such that these embeddings satisfy the same condition as that of Proposition~\ref{thmC}
\end{rmk}

{\bf Proof of Theorem~\ref{contra}.}
By using the deleting map $(0,\ast,\cdots,\ast)\mapsto (0,\ast)$ in Lemma~\ref{pcork}, the current theorem is satisfied.
Therefore, as a corollary of Lemma~\ref{pcork} and Corollary~\ref{allprecork} we deduce $(D_{n,m},\tau_{n,m})$ is an order $n$ cork.
\qed

{\bf Proof of Theorem~\ref{thmE}.}
Attaching 2-handles as the first diagram in {\sc Figure}~\ref{handlenejire3}, we obtain the last diagram in the figure.
The 4-manifolds for this diagram can be embedded in $E(l)\#n\overline{{\mathbb C}P^2}$.
This can be seen due to Figure~9.4 in \cite{GS}.
Actually, if $l\ge \lceil\frac{2n+1}{3}\rceil$, then this embedding into $E(l)\# n\overline{{\mathbb C}P^2}$ can be constructed.

We show that the twist by an embedding $E_{n,m}\hookrightarrow V_{l,n}$ produces $(2l-1){\mathbb C}P^2\#(10l+n-1)\overline{{\mathbb C}P^2}$ by using the argument in Exercise~9.3.4 in \cite{GS}.
$E_{n,m}\hookrightarrow V_{l,n}(E_{n,m},(\tau_{n,m}^E)^i)$ contains the handle diagram as in {\sc Figure}~\ref{CP2appear}.
By doing the handle slide as indicated by the arrow in the right hand side in {\sc Figure}~\ref{CP2appear}, we get a $\mathbb{C}P^2$ connected-sum component.
By using a $\mathbb{C}P^2$ formula ({\sc Figure}~\ref{aformula}), we get the third diagram in {\sc Figure}~\ref{CP2appear}.
A separated $\mathbb{C}P^2$ component in the third diagram can be moved to the position before separating by using the converse of the handle moves from the second picture to the third picture in {\sc Figure}~\ref{CP2appear}.
For the last figure in {\sc Figure}~\ref{CP2appear} we can use the method as Exercise 9.3.4. in \cite{GS}.
As a result, for $0<i\le n-1$ we have a diffeomorphism $V_{l,n}(E_{n,m},(\tau_{n,m}^E)^i)\cong (2l-1){\mathbb C}P\#(10l+n-1)\overline{\mathbb{C}P^2}\not\cong V_{n,l}$.
This means that $(E_{n,m},\tau_{n,m}^E)$ is an order $n$ cork.
\qed
\begin{figure}[htbp]
\begin{center}\includegraphics[width=.85\textwidth]{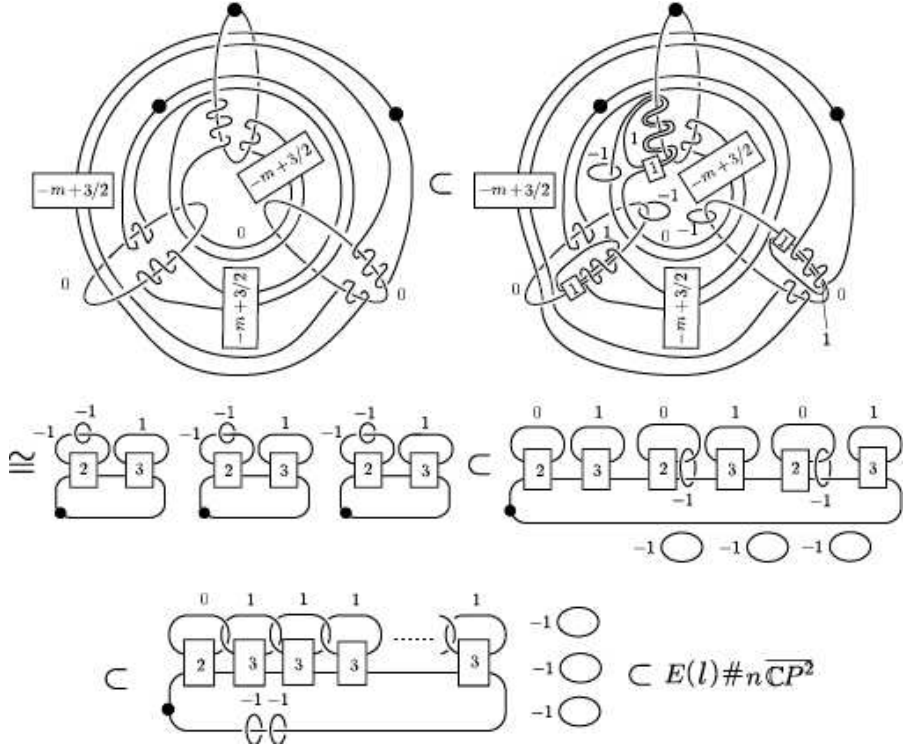}
\caption{An embedding $E_{n,m}\hookrightarrow E(l)\#n\overline{{\mathbb C}P^2}$.}
\label{handlenejire3}
\end{center}
\end{figure}
\begin{figure}[htbp]
\begin{center}\includegraphics[width=.85\textwidth]{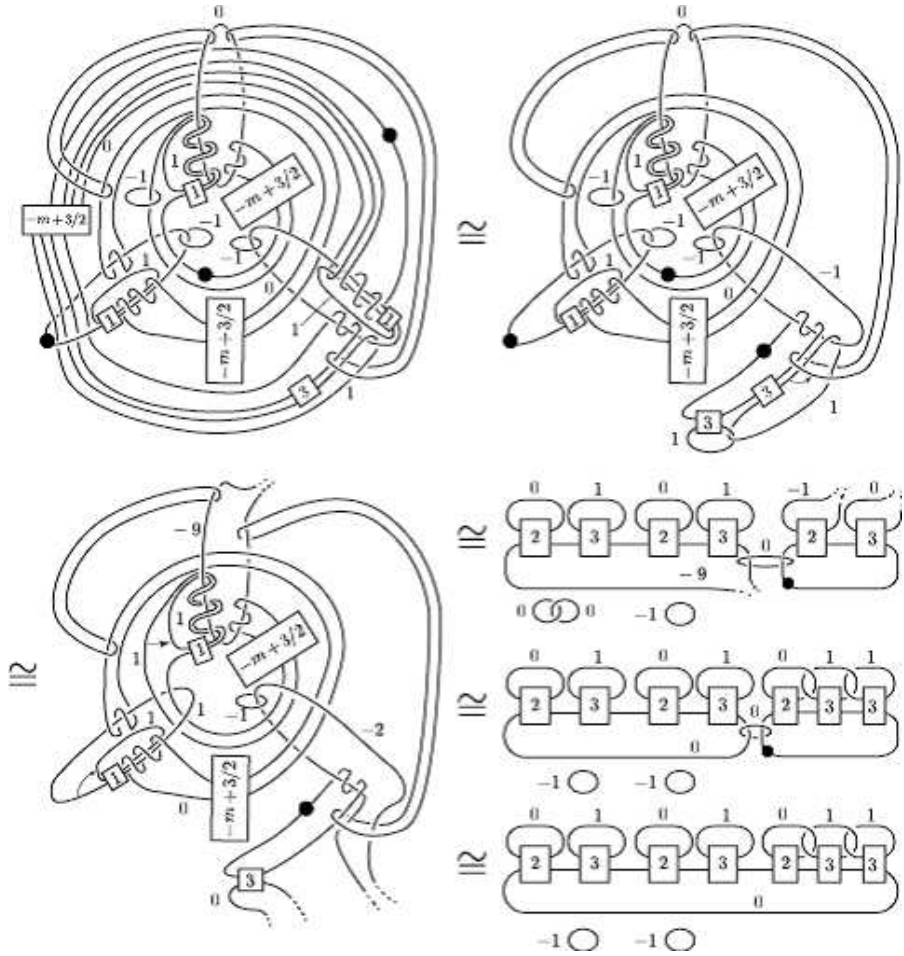}
\caption{A submanifold $E(l)\#n\overline{{\mathbb C}P^2}(E_{n,m},(\tau_{n,m}^E)^i)$.}
\label{CP2appear}
\end{center}
\end{figure}
\begin{figure}[htbp]
\begin{center}\includegraphics[width=.5\textwidth]{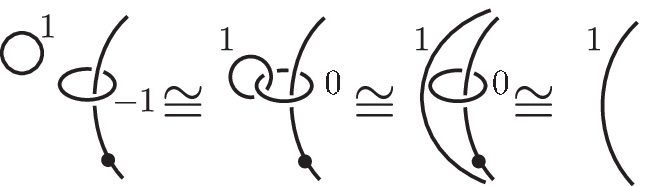}
\caption{A separated $\mathbb{C}P^2$ formula.}
\label{aformula}
\end{center}
\end{figure}

\begin{rmk}
Proposition~\ref{thmC} and Theorem~\ref{contra} imply that even when $C(m)$ cannot be embedded in a 4-manifold $X$,
if the diagram of $C(m)$ is contained in $X$ as a sub-diagram and $C_{n,m}\hookrightarrow X$ or $D_{n,m}\hookrightarrow X$ with respect to the sub-diagram, then by doing cork twist $(C_{n,m},\tau_{n,m}^C)$ or $(D_{n,m},\tau_{n,m})$ respectively, exchanging the dot and 0 in the sub-diagram of $X$ can be realized by an cork twist $C_{n,m}$ and $D_{n,m}$.
\end{rmk}
\subsection{Exotic 4-manifolds $W_{n,m,i}$.}
\label{exo}
If there does not exist any smoothly embedded $-1$-sphere in a 4-manifold, then we say the 4-manifold is minimal.

{\bf Proof of Proposition~\ref{collexo}.}
Let $W_{n,m}$ be a 4-manifold $C_{n,m}$ with $n(n-1)/2$ 2-handles attached.
For $1\le i\le n-1$, the attaching spheres are the $i$ parallel meridians of $\beta_i$ in the diagram of $C_{n,m}$ in {\sc Figure}~\ref{beisotopy}.
The framings are all $-1$.
The diagram of $W_{3,m}$ is {\sc Figure}~\ref{handle2}.
The manifold $W_{n,m}$ is simply-connected and $b_2=n(n-1)/2$ and intersection form is the $b_2$ direct sum of $\langle -1\rangle$. 
Clearly $W_{3,m}$ is a Stein manifold and in particular minimal.
Performing the cork twist $(C_{n,m},(\tau_{n,m}^C)^i)$ for $W_{n,m}$, we get a 4-manifold 
$$W_{n,m,i}=W_{n,m}(C_{n,m},(\tau_{n,m}^C)^i).$$
By this cork twist, the $i$ parallel meridians are moved to the parallel meridians of $\beta_0$, which is a $0$-framed 2-handle.
Thus the $i$ meridians can be blow-downed.
Hence, we get $W_{n,m,i}=W_{n,m,i}'\#\,i\,\overline{\mathbb{C}P^2}$.
Thus $W_{n,m,i}$ and $W_{n,m}$ are exotic, because $W_{n,m}$ is minimal.
\qed

The problem of whether $\{W_{n,m,i}\}_{i=0,\cdots,n-1}$ are mutually exotic 4-manifolds is remaining.
If all $W_{n,m,i}'$ for any $i$ are minimal, then $W_{n,m,i}$ are mutually exotic 4-manifolds, i.e., $(C_{n,m},\tau_{n,m}^C)$ is a cork for the collection $\{W_{n,m,i}\}$.
\begin{figure}[htbp]
\begin{center}
\includegraphics[width=.3\textwidth]{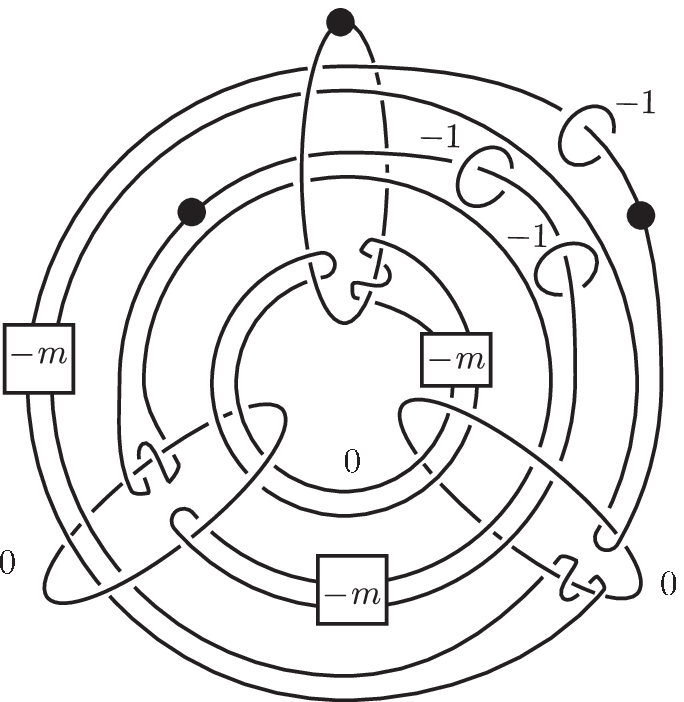}
\caption{The handle decomposition of $W_{3,m}$.}
\label{handle2}
\end{center}
\end{figure}

{\bf Proof of Proposition~\ref{exm1}.}
Let $F=F_{2,m}$ be a contractible 4-manifold defined in Definition~\ref{CDF}.
Let $\kappa=\tau_{2,m}^F$ be a rotation by $\pi/2$.
Since $(\ast,0,\ast,0)$ is a period 2 $\{\ast,0\}$-sequence, $(F,\kappa)$ is order $2$ cork by Lemma~\ref{allprecork}.
\qed
\begin{figure}[htbp]
\begin{center}\includegraphics[width=.4\textwidth]{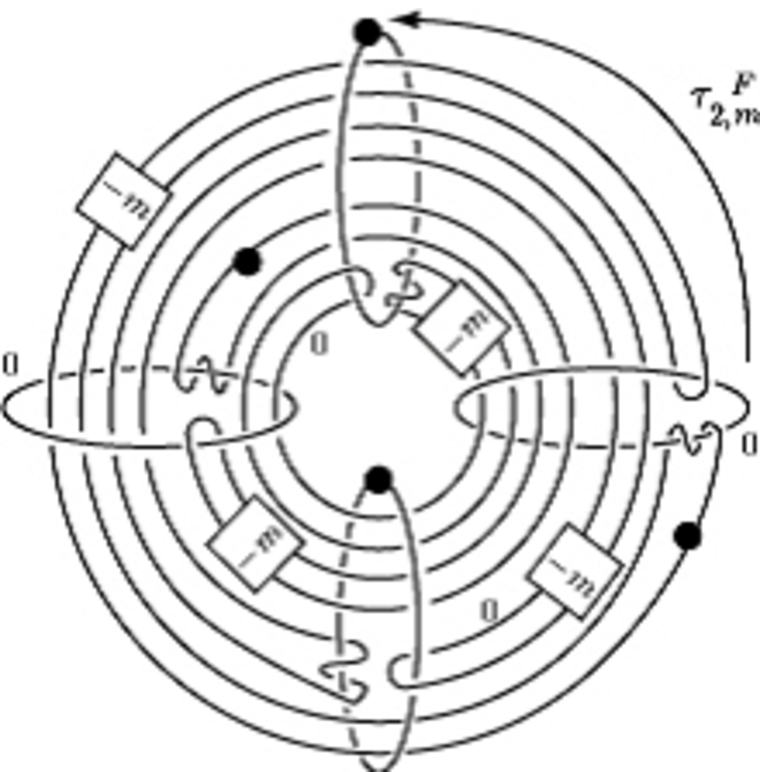}
\caption{A contractible 4-manifold $F_{2,m}$ and a diffeomorphism $\tau_{2,m}^F$ with order 4 as a diffeomorphism.}
\label{orderdiff}
\end{center}
\end{figure}
\section{A non-trivial action on $HF^+(\partial C_{n,m})$.}
In this section, as an application of finite order corks we prove Theorem~\ref{action}.
This theorem is a generalization of the main theorem in \cite{AK}.
The terms used here are the same ones as those in \cite{AK}.
The argument is parallel to Theorem~4.1 in \cite{AK}.

{\bf Proof of Theorem~\ref{action}.}
We may show that the map $(\tau_{n,m}^C)^i$ induces a non-trivial action on $HF^+(\partial C_{n,m})$.
Let $\tau_i$ denote $(\tau_{n,m}^C)^i$.
Let $\xi$ be the contact structure on $\partial C_{n,m}$ induced from the Stein structure on $C_{n,m}$.
As described as in {\sc Figure}~2 in \cite{AK}, we attach a 2-handle $H$ along a trefoil linked with $\beta_{n-i}$ with framing 1.
We denote $U_{i}=C_{n,m}\cup_{\beta_{n-i}} H$.
This manifold is a Stein manifold because the maximum Thurston-Bennequin number of the trefoil is 1 and the Stein structure of $C_{n,m}$ extends to $H$.
\begin{figure}[htbp]
\begin{center}
\includegraphics[width=.7\textwidth]{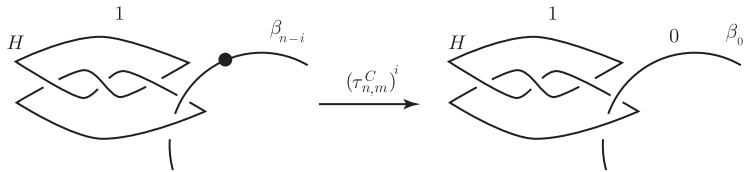}
\caption{The twist of $U_i$ via $(\tau_{n,m}^C)^i$.}
\label{handle2}
\end{center}
\end{figure}

Let $V$ be a concave extension of $(\partial C_i,\xi)$ of $H$ due to \cite{LM}.
Thus, $X_i=C_{n,m}\cup V$ is a closed symplectic structure with $b_2^+>1$.
We define the twist via $\tau_i$ by $X'_i=C_{n,m}\cup_{\tau_i} V$.
In this manifold, we can find an embedded self-intersection number 1 torus.
Here we use the following theorem.
Here $\Phi_{X,\frak{s}}$ is the Ozsv\'ath-Szab\'o 4-manifold invariant.
\begin{thm}[Ozsv\'ath-Szab\'o \cite{OS}]
Let $X$ be a closed 4-manifold.
Let $\Sigma\subset X$ be a homologically non-trivial embedded surface with genus $g\ge 1$ and with non-negative self-intersection number.
Then for each spin$^c$ structure $\frak{s}\in\text{Spin}^c(X)$ for which $\Phi_{X,\frak{s}}\neq0$, we have that
$$ |\langle c_1(\frak{s}), [\Sigma]\rangle | + [\Sigma] \cdot [\Sigma] \le 2g-2$$
The following is another version of the adjunction inequality along with a non-vanishing
result of the 4-manifold invariant for Lefschetz fibrations.
\end{thm}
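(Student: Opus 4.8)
The plan is to reduce the general inequality to the case of a surface of self\nobreakdash-intersection zero by a blow\nobreakdash-up argument, and then to establish the square\nobreakdash-zero case by cutting $X$ along the boundary of a product neighborhood of $\Sigma$ and invoking the vanishing of the Heegaard Floer homology of $\Sigma\times S^1$ in the relevant spin$^c$ structures. Throughout, recall that $\Phi_{X,\mathfrak{s}}$ is only defined when $b_2^+(X)>1$, a condition preserved under blow\nobreakdash-up.

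First I would carry out the reduction. Assume $[\Sigma]\cdot[\Sigma]=k>0$ and blow up $X$ at $k$ generic points of $\Sigma$, producing $\tilde X=X\#k\overline{\mathbb{C}P^2}$ with exceptional classes $E_1,\dots,E_k$. The proper transform $\tilde\Sigma$ still has genus $g$, represents $[\Sigma]-\sum_i E_i$, and now satisfies $[\tilde\Sigma]\cdot[\tilde\Sigma]=0$. For signs $\varepsilon_i\in\{\pm1\}$ let $\mathfrak{t}_i$ be the spin$^c$ structure on the $i$-th summand with $\langle c_1(\mathfrak{t}_i),E_i\rangle=\varepsilon_i$; the blow\nobreakdash-up formula then gives $\Phi_{\tilde X,\mathfrak{s}\#\mathfrak{t}_1\#\cdots\#\mathfrak{t}_k}=\pm\Phi_{X,\mathfrak{s}}\neq0$. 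Granting the square\nobreakdash-zero case for $\tilde\Sigma\subset\tilde X$ yields $|\langle c_1(\mathfrak{s}),[\Sigma]\rangle-\sum_i\varepsilon_i|\le2g-2$; taking all $\varepsilon_i=+1$ and then all $\varepsilon_i=-1$ and combining the two inequalities produces $|\langle c_1(\mathfrak{s}),[\Sigma]\rangle|+k\le2g-2$, which is exactly the assertion.

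Next I would prove the square\nobreakdash-zero case, so assume $[\Sigma]\cdot[\Sigma]=0$. Then $\Sigma$ has a product neighborhood $\Sigma\times D^2$ with boundary $N=\Sigma\times S^1$, giving a splitting $X=A\cup_N(\Sigma\times D^2)$ along the separating hypersurface $N$. The mixed invariant $\Phi_{X,\mathfrak{s}}$ is computed as a composition of the cobordism maps determined by this splitting, and that composition factors through the relevant flavor of the Floer homology of $N$ in the restricted spin$^c$ structure $\mathfrak{s}|_N$. Since $[\Sigma]$ is carried by the fiber $\Sigma\times\{\text{pt}\}\subset N$, we have $\langle c_1(\mathfrak{s}|_N),[\Sigma]\rangle=\langle c_1(\mathfrak{s}),[\Sigma]\rangle$. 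Hence if $|\langle c_1(\mathfrak{s}),[\Sigma]\rangle|>2g-2$, the Floer homology of $N$ in this spin$^c$ structure vanishes, forcing $\Phi_{X,\mathfrak{s}}=0$; the contrapositive is the square\nobreakdash-zero inequality.

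The main obstacle is the three\nobreakdash-dimensional input just used: the genus bound $HF^+(\Sigma\times S^1,\mathfrak{t})=0$ whenever $g\ge1$ and $|\langle c_1(\mathfrak{t}),[\Sigma]\rangle|>2g-2$. Proving this demands an explicit computation of the Heegaard Floer homology of the product three\nobreakdash-manifold in the spin$^c$ structures whose evaluation on the fiber is large, carried out through the surgery exact triangle relating $\Sigma\times S^1$ to $\#^{2g}(S^1\times S^2)$ and the structure of the Floer homology of fibered three\nobreakdash-manifolds. Matching the truncation conventions in the factorization of $\Phi$ with this vanishing --- and checking that, after an isotopy, the class $[\Sigma]$ is genuinely supported on $N$ so that $\mathfrak{s}|_N$ is the non\nobreakdash-torsion class to which the vanishing applies --- are the remaining technical points to pin down.
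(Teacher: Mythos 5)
This theorem is not proved in the paper at all: it is quoted verbatim as background from Ozsv\'ath--Szab\'o \cite{OS} (indeed, the trailing sentence of the ``statement'' is a stray remark carried over from the source, not part of the theorem), so there is no internal proof to compare against, and the right benchmark is the original argument in \cite{OS}. Your outline reconstructs exactly that argument: reduce to the square-zero case by blowing up $k=[\Sigma]\cdot[\Sigma]$ points on $\Sigma$ and invoking the blow-up formula for the mixed invariant (your sign bookkeeping is right --- taking all $\varepsilon_i=+1$ and all $\varepsilon_i=-1$ and combining does yield $|\langle c_1(\mathfrak{s}),[\Sigma]\rangle|+k\le 2g-2$), then cut along $N=\Sigma\times S^1$ and use the vanishing $HF^+(\Sigma\times S^1,\mathfrak{t})=0$ for $|\langle c_1(\mathfrak{t}),[\Sigma]\rangle|>2g-2$, $g\ge 1$. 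This is faithful to the standard proof.

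Two caveats on the points you yourself flag as remaining. First, the factorization step is genuinely delicate: since $\Sigma\times D^2$ has $b_2^+=0$, the cut along $N$ is \emph{not} an admissible cut in the sense needed to define the mixed invariant, so you cannot literally say ``$\Phi_{X,\mathfrak{s}}$ is computed as a composition determined by this splitting.'' One must place the admissible cut inside the complement $A$ and then observe that the remaining $F^+$-composite factors through $HF^+(N,\mathfrak{s}|_N)$, summing over spin$^c$ structures on the pieces that glue to $\mathfrak{s}$; the vanishing of that single group then kills the whole sum in the fixed spin$^c$ structure $\mathfrak{s}$. Second, the three-dimensional vanishing is the real content and you leave it as an input; in the literature it follows from the adjunction-type vanishing theorem for $HF^+$ of three-manifolds containing a genus-$g$ surface with $|\langle c_1(\mathfrak{t}),[\Sigma]\rangle|>2g-2$ (Ozsv\'ath--Szab\'o's ``properties and applications'' paper), rather than from a surgery-triangle comparison with $\#^{2g}(S^1\times S^2)$ as you suggest. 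So: as a blind reconstruction of a cited theorem your route is the correct one and matches \cite{OS}; as a self-contained proof it has these two acknowledged gaps, of which the admissible-cut issue is the one that would actually derail a naive write-up.
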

Hence, the Ozsv\'ath-Szab\'o 4-manifold invariant for $X'$ has no basic class.
Here we obtain the same computation as in \cite{AK}:
$$F_{C_{n,m},\frak{s}_0}^+(c^+(\xi))=\pm F^+_{C_{n,m},\frak{s}_0}\circ F^{\text{mix}}_{V,\frak{s}}(\Theta_{(-2)}^-)=F^{\text{mix}}_{X,\frak{s}}(\Theta_{(-2)}^-)=\pm\Theta_{(0)}^+,$$
and on the other hand,
$$ F_{C_{n,m},\frak{s}_0}^+(\tau_i^\ast(c^+(\xi)))=F^+_{C_{n,m},\frak{s}_0}\circ \tau_i^\ast\circ F^{\text{mix}}_{V,\frak{s}}(\Theta_{(-2)}^-)=\pm F^{\text{mix}}_{X',\frak{s}'}(\Theta_{(-2)}^-)=0,$$
where $\frak{s}$ is the canonical spin$^c$ structure on symplectic manifold $X$ and $\frak{s}'$ is structure, the induced spin$^c$ structure on $X'$,
and $\frak{s}_0$ is the restriction of $\frak{s}$ and $\frak{s}'$ to $C_{n,m}$.
Here $c^+(\xi)$ is Ozsv\'ath-Szab\'o's contact invariant for $(\partial C_{n,m}^C,\xi)$.
We use the equality $F^{\text{mix}}_{W,\xi}(\Theta_{(-2)}^-)=\pm c^+(\xi)$ in \cite{P}, where $\xi$ is a contact structure on $\partial W$ with torsion $c_1$.

These two inequalities above means that $c^+(\xi)$ and $\tau_i^\ast (c^+(\xi))$ are distinct elements.
Thus, $\tau_{n,m}^C$ act on $HF^+(\partial C_{n,m})$ effectively.
Namely, $\xi,\tau_1^\ast(\xi),\cdots,\tau_{n-1}^\ast(\xi)$ are distinct elements.
\qed

{\bf Proof of Proposition~\ref{contact}.}
Let $\xi$ be a contact structure on $\partial C_{n,m}$ induced by the Stein structure.
Let $\xi_i$ denote $\tau_i^\ast(\xi)$.
Each two structures $\xi_i,\xi_j$ are homotopic 2-plane fields.
For, there exist a trivial cobordism between these contact 3-manifolds, hence $\xi_i,\xi_j$ are the same 3-dimensional invariants by \cite{G}.
This means that these contact structures are homotopic each other.
The diffeomorphism $\tau_{i-j}$ gives a contactomorphism from $\xi_i$ to $\xi_j$.
However, $\xi_i,\xi_j$ are not isotopic because $c^+(\xi_i)$ and $c^+(\xi_j)$ are distinct.
Furthermore, $\xi_i$ is Stein filling.
\qed
\section{Problems}
Here we raise the two problems.
\begin{prob}
Show that $(C_{n,m},\tau_{n,m}^C)$ is a cork for the collection $\{W_{n,m,i}\}_{i=0,1,\cdots,n-1}$.
\end{prob}
\begin{prob}
Show that $D_{n,m},E_{n,m}$ are also finite order Stein corks.
In general for any $\{\ast,0\}$-sequence ${\bf x}$, show that $X_{n,m}({\bf x})$ is a Stein manifold.
\end{prob}

\end{document}